\newcommand{\g}{\mathfrak g}
\newcommand{\q}{\mathfrak q}
\newcommand{\uf}{\mathfrak u}
\newcommand{\Of}{\mathcal O}
\newcommand{\Pf}{\mathcal P}
\newcommand{\Sf}{\mathcal S}
\newcommand{\X}{\mathcal X}
\newcommand{\ad}{\mathop{\mathrm{ad}}\nolimits}
\newcommand{\Ad}{\mathop{\mathrm{Ad}}\nolimits}
\newcommand{\GL}{\mathop{\mathrm{GL}}\nolimits}
\newcommand{\Ind}{\mathop{\mathrm{Ind}}\nolimits}
\newcommand{\Int}{\mathop{\mathrm{Int}}\nolimits}
\newcommand{\Pic}{\mathop{\mathrm{Pic}}\nolimits}
\newcommand{\tr}{\mathop{\mathrm{tr}}\nolimits}
\newcommand{\codim}{\mathop{\mathrm{codim}}\nolimits}
\newcommand{\id}{\mathrm{id}}
\newcommand{\gen}{^\mathrm{gen}}
\newcommand{\reg}{^\mathrm{reg}}
\newcommand{\infl}{^\mathrm{inf}}
\newcommand{\can}{^\mathrm{can}}
\newtheorem{theorem}{Theorem}
\newtheorem{prop}{Proposition}
\newtheorem{lemma}{Lemma}
\newtheorem*{conj}{Conjecture}
\newtheorem{cor}{Corollary}
\begin{document}

\title{Induced conjugacy classes, prehomogeneous varieties, and canonical parabolic subgroups}
\author{Werner Hoffmann}
\maketitle
\begin{abstract}
We prove the properties of induced conjugacy classes, without using the original proof by Lusztig and Spaltenstein in the unipotent case, by adapting Borho's simpler arguments for induced adjoint orbits. We study properties of equivariant fibrations of prehomogeneous affine spaces, especially the existence of relative invariants. We also detect prehomogeneous affine spaces as subquotients of canonical parabolic subgroups attached to elements of reductive groups in the sense of Jacobson-Morozov. These results are prerequisites for making the geometric expansion of the Arthur-Selberg trace formula more explicit.

Mathematics Subject Classification: 20G15, 14L30
\end{abstract}

This paper contains prerequisites for an explicit description of certain coefficients that appear on the geometric side of Arthur's trace formula for a reductive group~$G$ (see section~19 of~\cite{Ar2} for an introduction). Those coefficients grew out of an invariance argument that did not allow their determination. In previous work on groups of low rank (\cite{Fl}, \cite{Ho}), the coefficients in question had been related to certain prehomogeneous zeta integrals. In a forthcoming paper, we aim to generalise that approach to groups of general rank.

Before entering the analytical argument, one has to identify, in the structure of the group~$G$, the prehomogeneous vector spaces supporting those zeta integrals. We will see that they are closely related to the canonical parabolic subgroups $Q$ of the elements of~$G$. Actually, in the case of non-unipotent elements, prehomogeneous affine spaces appear as well. More general prehomogeneous varieties enter the stage in still another role, which can be explained as follows.

The final form of the geometric side of the trace formula is a sum indexed by conjugacy classes. At the outset, it is an integral of a sum indexed by cosets in parabolic subgroups $P$ with respect to their unipotent radicals~$N$. When combining contributions from various parabolic subgroups~$P$ to a conjugacy class of~$G$, the notion of an induced conjugacy class fits in naturally. This notion was introduced by Lusztig and Spaltenstein~\cite{LS} in the case of unipotent conjugacy classes, and we extend the required results to the case of arbitrary conjugacy classes. It is useful to view the process of induction as an inflation from a Levi component $M$ to $P$ followed by the induction proper from $P$ to~$G$. The definition of inflation can be phrased in terms of prehomogeneous varieties.

The application of these ideas to the trace formula is hampered by the fact that the elements of an inflated conjugacy class in $P$ have different canonical parabolics $Q$ in general. This necessitates the application of a mean-value formula, for which we have to make sure that certain prehomogeneous affine spaces are special. I was not able to prove this in general and leave it as a conjecture.

The arguments in this paper are purely algebraic and independent of the trace formula. Since they may be of wider interest in the theory of linear algebraic groups, I decided to single them out in a separate paper.

Here are some notational conventions. If a group~$G$ acts on a set~$V$, the subset of fixed points of an element $\gamma$ of $G$ will be denoted by~$V^\gamma$ and the stabiliser of an element $\xi$ of $V$ by~$G^\xi$. The smallest $G$-invariant subset of~$V$ containing a given subset~$S$ will be written as~$[S]_G$. If $V$ is a normal subgroup of~$G$, on which $G$ acts by inner automorphisms, then $V^\gamma$ becomes the centraliser of~$\gamma$. Its trivial connected component will be denoted by~$V_\gamma$. If $F$ is a field and if an algebraic $F$-group $G$ acts $F$-morphically on an $F$-variety~$V$, then a geometric $F$-orbit is a minimal $G$-invariant $F$-closed subset of~$V$. The Lie algebra of a linear algebraic group will be denoted by the corresponding lower-case gothic letter. Unless stated otherwise, algebraic varieties $V$ are defined over an algebraically closed field~$E$, in which case we identify $V$ with the set~$V(E)$. From section~2 onwards, we have to assume that the $E$ has characteristic zero.

This work was supported by the SFB~701 of the German Research Foundation.

\section{Induction of conjugacy classes}

The notion of induced unipotent conjugacy classes was introduced in~\cite{LS} as a generalisation of the notion of a Richardson class attached to a parabolic subgroup. There is a parallel notion of induced nilpotent adjoint orbits. The induction of general adjoint orbits has been present for a while (cf. \cite{Bo},~\cite{Ke}), whereas the corresponding notion at the group level has been studied only recently (cf.~\cite{CE}), although it was mentioned in~\cite{Ar}, p.~255. The method in~\cite{CE} (and in a previous version of this paper) was reduction to the unipotent case. Since the original proofs in~\cite{LS} for that case are quite involved, we will rather transfer the easy direct arguments from~\cite{Bo} to the group case.

\begin{prop}\label{defind}
Let $G$ be a connected reductive algebraic group. Let $P$ be a parabolic subgroup of $G$ with unipotent radical $N$ and some Levi component~$M$.
\begin{enumerate}
\item[(i)] Given a conjugacy class $C$ in~$M$, there is a unique conjugacy class $\tilde C$ in $G$, called the class induced from $C$ via~$P$, such that $\tilde C\cap CN$ is open and dense in~$CN$.
\item[(ii)] If, in the situation of~(i), we have $N_\mu=\{1\}$ for some (hence any) $\mu\in C$, then $CN$ is a conjugacy class of~$P$, and $\tilde C=[C]_G$.
\item[(iii)] If $D$ is a conjugacy class of~$G$, then there can be only finitely many conjugacy classes of~$M$ whose induction via~$P$ yields~$D$.
\item[(iv)] If $G$, $P$, $M$ and $C$ are defined over a subfield $F$ of~$E$ and $C(F)\ne\emptyset$, then $\tilde C$ is defined over~$F$ and $\tilde C(F)\ne\emptyset$.
\end{enumerate}
\end{prop}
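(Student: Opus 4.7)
My plan is to transfer Borho's strategy for induced adjoint orbits to the group case, replacing the Lie algebra $\mathfrak g$ by $G$ and the affine space $\mathfrak c + \mathfrak u$ by the subvariety $CN \subseteq P$. The central object is the $G$-equivariant conjugation morphism
\[
\psi \colon G \times^P CN \longrightarrow G, \qquad [g,x] \longmapsto g x g^{-1},
\]
whose image equals $[CN]_G$. The Levi decomposition $P = M \ltimes N$ yields $CN \cong C \times N$ as varieties, so the source is irreducible of dimension $\dim G - \dim P + \dim CN = \dim G - \dim M^\mu$ for any $\mu \in C$, and consequently so is $[CN]_G$.

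For (i), the crux is the following claim: there exists $\gamma \in CN$ with $\dim G^\gamma = \dim M^\mu$. Granted this, the $G$-conjugacy class of $\gamma$ has dimension $\dim G - \dim M^\mu = \dim[CN]_G$; being an irreducible locally closed subset of the irreducible variety $[CN]_G$ of maximal dimension, it is automatically open and dense, and by irreducibility it is the unique such class. This class is $\tilde C$. Establishing the claim is where I expect the main obstacle to lie: the natural candidate is $\gamma = \mu n$ for generic $n \in N$, and one verifies by a tangent-space computation that $\Ad(\gamma) - \id$ is injective on $\mathfrak n$ (forcing $N_\gamma = \{1\}$) and that the identity component of $G^\gamma$ is contained in $M$. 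I anticipate reducing via the Jordan decomposition $\mu = \mu_s \mu_u$ first to the reductive subgroup $G^{\mu_s}$, whose intersection with $P$ is a parabolic with Levi component $M^{\mu_s}$, and then to a unipotent-plus-unipotent situation within it.

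For (ii), the hypothesis $N_\mu = \{1\}$ gives $\mathfrak n^\mu = 0$ and hence $N^\mu = \{1\}$, since $N^\mu$ is a finite unipotent group and the characteristic is zero. The orbit map $N \to \mu N$, $n \mapsto n \mu n^{-1}$, is then injective, and its differential at any point $n_0$ equals $\Ad(n_0) \circ (\Ad(\mu^{-1}) - \id)$, which is an isomorphism. So, as an injective \'etale morphism between connected unipotent groups of the same dimension in characteristic zero, it is an isomorphism of $N$ onto $\mu N$. Therefore $N \cdot \mu = \mu N$, and conjugating by $M$ yields $P \cdot \mu = M \mu N = CN$. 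Thus $CN$ is a single $P$-orbit; since $\tilde C$ meets $CN$, it contains it, and $\tilde C \supseteq C$ forces $\tilde C = [C]_G$.

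Claims (iii) and (iv) follow from the structure established in (i). For (iii), the subsets $C'N$ indexed by the conjugacy classes $C'$ of $M$ are pairwise disjoint locally closed subvarieties of $P$ by uniqueness of the Levi decomposition; for those $C'$ inducing the fixed $G$-class $D$, the open dense subsets $C'N \cap D$ form a disjoint family of irreducible locally closed subsets of $D \cap P$, all of the common dimension $\dim D - \dim G + \dim P$ (by the dimension formula from (i)), and since $D \cap P$ is a Noetherian topological space only finitely many such may coexist. For (iv), $F$-rationality of $\tilde C$ is immediate from the uniqueness characterisation in (i), which is invariant under every $F$-automorphism of $E$. To exhibit an $F$-point, take $\mu \in C(F)$: the intersection $\tilde C \cap \mu N$ is a nonempty open subset of $\mu N$ (because $\tilde C \cap CN$ is $M$-invariant, open and dense in the irreducible variety $CN$), while $N(F)$ is Zariski dense in $N$ since $N$ is $F$-rational and $F$ is infinite; hence some $n \in N(F)$ satisfies $\mu n \in \tilde C(F)$.
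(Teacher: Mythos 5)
Your overall architecture is sound in outline, but the proof of the key claim in (i) does not work, and since (iii) leans on the dimension formula extracted from that claim, the gap propagates. Concretely: it is false that for generic $n\in N$ the map $\Ad(\mu n)-\id$ is injective on $\mathfrak n$, and false that the identity component of $G^{\mu n}$ lies in $M$. Take $G=\mathrm{SL}_2$, $P=B$ a Borel, $M=T$, $C=\{1\}$, so $CN=N$. Then $\tilde C$ is the regular unipotent class, every $\gamma\in\tilde C\cap N$ has $N_\gamma=N$, and $G_\gamma=ZN\not\subset T$ --- yet $\dim G^\gamma=1=\dim T$. This is the general picture: for $\delta\in\tilde C\cap CN$ one only has $G_\delta\subset P$ (the paper's Theorem~1(iv)), not $G_\delta\subset M$, and $N_\delta$ is typically nontrivial whenever induction is not just saturation. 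So your tangent-space verification cannot be repaired along the proposed lines; the dimension equality $\dim G^\gamma=\dim M^\mu$ is genuinely hard and is exactly the content of Theorem~1(i)--(ii) of the paper, proved there only after Propositions~1 and~2 by deforming $C$ to $aC$ with $a$ regular in the centre of $M$ and using semicontinuity of fibre dimensions of the adjoint-quotient map. The paper's proof of existence and uniqueness in (i) deliberately avoids all of this: by Lemma~2 the semisimple parts of elements of $CN$ form a single conjugacy class, so by Richardson and Lusztig $CN$ meets only finitely many $G$-classes, and irreducibility of $CN$ forces exactly one of them to be dense (openness being automatic since a class is open in its closure). You should prove (i) this way and leave the dimension formula for later.

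Two smaller points. In (iii), even granting the dimension formula, ``pairwise disjoint irreducible locally closed subsets of common dimension in a Noetherian space'' can be infinite in number (parallel lines in the plane); you would additionally need that $\dim(D\cap P)$ equals that common dimension, so that the closures are distinct irreducible components of $\overline{D\cap P}$. The paper instead deduces (iii) from $C_{\mathrm s}\subset D_{\mathrm s}$ together with Richardson's theorem that $D_{\mathrm s}\cap M$ is a finite union of $M$-classes and the finiteness of unipotent classes in centralisers --- no dimensions needed. In (ii), your step ``injective \'etale morphism of affine spaces is an isomorphism'' is not a standard fact as stated; you need either Ax--Grothendieck (injective endomorphisms of varieties are surjective) or, more simply, the closedness of orbits of unipotent groups on affine varieties, after which openness plus closedness in the irreducible $\mu N$ finishes it. Parts (iv) and the remainder of (ii) are essentially the paper's argument and are fine.
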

Actually, there is no need to fix a particular Levi component~$M$. We could identify $M$ with $P/N$ and $CN$ with the preimage of $C$ under the natural epimorphism $P\to P/N$. The varieties appearing in~(iv) can be given independently of~$E$ by their affine $F$-algebras, in which case $C$ and $\tilde C$ are geometric $F$-conjugacy classes, i.~e.\ geometric $F$-orbits for the action of the respective groups by inner automorphisms.

In the proof of this and other results we will use the following well-known lemma.
\begin{lemma}\label{gN}
Let $P$ be a linear algebraic group, $N$ a connected unipotent normal subgroup and $\delta\in P$ with semisimple component~$\sigma$. Then $\delta N=[\delta N_\sigma]_N$.
\end{lemma}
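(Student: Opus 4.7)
The inclusion $[\delta N_\sigma]_N\subseteq\delta N$ is immediate, since $N$ normal in $P$ makes the coset $\delta N$ stable under $N$-conjugation. The substance is the reverse inclusion, and my plan is to establish it via Jordan decomposition together with a standard structural fact about semisimple elements modulo unipotent radicals.

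Let $\delta n\in\delta N$ and write $\delta n=sv$ for its Jordan decomposition in $P$, with $s$ semisimple and $v$ unipotent commuting with $s$. Algebraic homomorphisms respect Jordan decomposition; applying this to the projection $P\to P/N$ shows that $s$ and $\sigma$ have the same image, so $s\in\sigma N$. The key auxiliary assertion is that any two semisimple elements of $P$ lying in a common $N$-coset are $N$-conjugate; granting this, choose $m\in N$ with $msm^{-1}=\sigma$. Then $m(\delta n)m^{-1}=\sigma\cdot mvm^{-1}$, and conjugation preserving Jordan decomposition forces $mvm^{-1}$ to be unipotent and to commute with $\sigma$. Since $m(\delta n)m^{-1}\in\delta N$, write it as $\delta\nu$ with $\nu\in N$; a short computation gives $\nu=u^{-1}(mvm^{-1})$, which centralises $\sigma$ because both factors do. In characteristic zero $N^\sigma$ is connected (via $\exp$ and the semisimplicity of $\Ad(\sigma)$ on $\mathfrak n$), so $\nu\in N_\sigma$, whence $\delta n\in[\delta N_\sigma]_N$.

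The main obstacle is the auxiliary statement. I would prove it by a Lang--Steinberg style argument for connected unipotent groups in characteristic zero: induct along a $P$-stable central series of $N$ with abelian quotients, reducing at each abelian stratum to solving the equation $(\Ad(\sigma^{-1})(m))^{-1}m=n_0$ for $m\in N$, where $n_0=\sigma^{-1}s$. The semisimple decomposition $\mathfrak n=\mathfrak n^\sigma\oplus(\Ad(\sigma)-1)\mathfrak n$ identifies the image of the left-hand side with $(\Ad(\sigma)-1)\mathfrak n$, and the obstruction to solvability lies in $\mathfrak n^\sigma$. This obstruction must vanish, for any nonzero $\sigma$-fixed component of $n_0$ would, via the commutation of $\sigma$ with $n_0^\sigma$, contribute a nontrivial unipotent part to $s$, contradicting its semisimplicity.
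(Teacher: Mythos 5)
Your argument is essentially correct in characteristic zero and reconstructs the kind of reasoning one expects to find behind the paper's citation: the paper offers no proof of its own, only the pointer to the proof of Lemma~2.1 in Arthur's \emph{Duke} paper. The inclusion $[\delta N_\sigma]_N\subseteq\delta N$ by normality, the passage through Jordan decomposition, the $N$-conjugacy of semisimple elements sharing an $N$-coset, and the connectedness of $N^\sigma$ together give the reverse inclusion. Three caveats are worth recording. First, the symbol $u$ is never introduced; from the surrounding computation it must denote the unipotent part of $\delta$, so that $\delta=\sigma u$ with $\sigma u=u\sigma$, and then $\delta\nu=\sigma\,(mvm^{-1})$ indeed gives $\nu=u^{-1}(mvm^{-1})$, which centralises $\sigma$. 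Second, the auxiliary conjugacy claim is only sketched; the abelian stratum and the vanishing of the $\mathfrak n^\sigma$-obstruction (forced by semisimplicity of $s$, since after absorbing the complementary component one is left with $\sigma\exp(a)$, $a\in\mathfrak n^\sigma$, whose Jordan decomposition has unipotent part $\exp(a)$) are correct, but the lift along the lower central series should be written out, noting that conjugating preserves semisimplicity so the hypothesis persists at each stage. Third, your proof is genuinely tied to characteristic zero through the use of $\exp$ both for the connectedness of $N^\sigma$ and for the additive model in the abelian case, whereas this lemma sits in Section~1 of the paper, where the ground field has not yet been restricted to characteristic zero. The needed facts (connectedness of $N^\sigma$ and the splitting $\mathfrak n=\mathfrak n^\sigma\oplus(\Ad\sigma-1)\mathfrak n$) do hold in general, via linear reductivity of the diagonalisable group $\overline{\langle\sigma\rangle}$, but a characteristic-free treatment must argue along a composition series with $\mathbb G_a$ quotients rather than through the exponential.
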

This follows from the proof of Lemma~2.1 of~\cite{Ar1}.
\begin{proof}[Proof of Proposition~\ref{defind}.]
If $D$ is a conjugacy class in~$G$, then the semisimple components of the elements of $D$ make up a conjugacy class~$D_{\mathrm{s}}$. By~\cite{Ri} and~\cite{Lu}, there are only finitely many unipotent conjugacy classes in $G_\sigma $ for a given semisimple element~$\sigma$, hence there are only finitely many conjugacy classes $D$ in $G$ with prescribed semisimple class~$D_{\mathrm{s}}$.

Now let $\mu\in C$ with semisimple component~$\sigma$. It follows from Lemma~\ref{gN} that the semisimple component of every element of $\mu N$ is $N$-conjugate to~$\sigma$. Thus, the semisimple components of every element of~$CN=[\mu N]_P$ is $P$-conjugate to~$\sigma$. By the preceding remarks, $CN$ meets only finitely many $G$-conjugacy classes, so there is a conjugacy class $\tilde C$ in $G$ such that the set $\tilde C\cap CN$ is dense in~$CN$. The former set must be open in the latter because $\tilde C$ is open in its closure. Since $CN$ is irreducible, $\tilde C$ unique. This proves~(i) and shows that $C_{\mathrm{s}}\subset\tilde C_{\mathrm{s}}$.

In the case $N_\mu=\{1\}$, Lemma~\ref{gN} shows that $\mu N=[\mu]_N$, hence $CN=[\mu N]_M=[\mu]_P$. Therefore, $\mu\in\tilde C$, and (ii) follows. 

By results of~\cite{Ri}, $D_{\mathrm{s}}\cap M$ consists of finitely many $M$-conjugacy classes, hence there are only finitely many classes $C$ in $M$ such that~$C_{\mathrm{s}}\subset D_{\mathrm{s}}$, and (iii) is proved.

If a coset $\mu N$ with $\mu\in C$ did not meet~$\tilde C$, neither would $[\mu N]_P=CN$, which contradicts our construction. Thus, the open subset $\tilde C\cap\mu N$ of $\mu N$ is non-empty. The connected unipotent group $N$ is $F$-split, hence $N(F)$ is a dense subset by Theorem~14.3.8 of~\cite{Sp}. If $\mu\in C(F)$, then the set of $F$-rational points in $\mu N$ is the $\mu$-translate of~$N(F)$, so it contains points of~$\tilde C$. Now $\tilde C$ is defined over~$F$ by Prop.~12.1.2 of~\cite{Sp}.
\end{proof}
Since the induced conjugacy class $\tilde C$ is uniquely determined by $C$ and~$P$, we shall denote it by $\Ind_P^G(C)$.

If $\sigma$ is the semisimple component of some element of a conjugacy class $C$ in~$M$, we denote by $C_\sigma$ be the set of all unipotent elements $\nu\in M_\sigma$ such that $\sigma\nu\in C$. Then $C_\sigma$ is a conjugacy class in~$M_\sigma$. The same construction can be applied to conjugacy classes in~$G$.
\begin{lemma}\label{desc}
In the situation of Proposition~\ref{defind},
\[
\Ind_P^G(C)_\sigma=\Ind_{P_\sigma}^{G_\sigma}(C_\sigma). 
\]
\end{lemma}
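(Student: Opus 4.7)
The strategy is to verify that the $G_\sigma$-conjugacy class $\tilde C_\sigma$ meets $C_\sigma N_\sigma$ in a dense subset and then to invoke the uniqueness part of Proposition~\ref{defind}(i), applied to the datum $(G_\sigma, P_\sigma, M_\sigma, C_\sigma)$, in order to identify $\tilde C_\sigma$ with $\Ind_{P_\sigma}^{G_\sigma}(C_\sigma)$. The task thus reduces to transferring the defining density property of $\tilde C \cap CN$ inside $CN$ down to the level of the centraliser~$G_\sigma$.

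First I would fix $\mu \in C$ with Jordan decomposition $\mu = \sigma\nu$, $\nu \in C_\sigma$, and show that $\tilde C \cap \mu N_\sigma$ is open and dense in $\mu N_\sigma$. Non-emptiness is the key point: Lemma~\ref{gN} gives $\mu N = [\mu N_\sigma]_N$, so if $\tilde C$ missed $\mu N_\sigma$, then by $N$-invariance it would miss every $N$-conjugate of $\mu N_\sigma$ and hence all of $\mu N$, contradicting the density of $\tilde C \cap \mu N$. Openness is then automatic by restricting the open set $\tilde C \cap \mu N$ along the closed subvariety $\mu N_\sigma \subseteq \mu N$, and density follows because $\mu N_\sigma$, being a translate of $N_\sigma$, is irreducible.

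Next, each element $\sigma \nu n$ of $\mu N_\sigma$ has $\sigma$ as semisimple and $\nu n$ as unipotent Jordan component: indeed, $\nu n$ projects under $P_\sigma \to M_\sigma$ to the unipotent element $\nu$ and so is itself unipotent, while it commutes with $\sigma$. Hence by the very definition of $\tilde C_\sigma$, left translation by $\sigma^{-1}$ identifies $\tilde C \cap \mu N_\sigma$ with $\tilde C_\sigma \cap \nu N_\sigma$, and the latter is open dense in $\nu N_\sigma$.

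Finally, to bootstrap from $\nu N_\sigma$ to $C_\sigma N_\sigma$: since $M_\sigma$ normalises $N_\sigma$, one has $C_\sigma N_\sigma = \bigcup_{m \in M_\sigma} m(\nu N_\sigma) m^{-1}$, and the $M_\sigma$-invariance of $\tilde C_\sigma$ promotes density from $\nu N_\sigma$ to every $M_\sigma$-conjugate slice $m(\nu N_\sigma) m^{-1}$. The closure of $\tilde C_\sigma \cap C_\sigma N_\sigma$ therefore contains every such slice, hence all of the irreducible variety $C_\sigma N_\sigma$; the uniqueness in Proposition~\ref{defind}(i) then delivers $\tilde C_\sigma = \Ind_{P_\sigma}^{G_\sigma}(C_\sigma)$. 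The main obstacle I anticipate is the density argument for $\tilde C \cap \mu N_\sigma$ in the second paragraph, since a priori $\tilde C$ could conceivably avoid the smaller variety $\mu N_\sigma$; once this is dealt with via Lemma~\ref{gN} and the $N$-invariance of $\tilde C$, the rest is bookkeeping with Jordan decompositions and $M_\sigma$-invariance.
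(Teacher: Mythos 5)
Your argument is correct and follows essentially the same route as the paper's: both hinge on Lemma~\ref{gN} to produce a point of $\tilde C$ in the $\sigma$-centralised slice and then conclude by openness plus irreducibility of the ambient coset, the paper merely working with $C_\sigma N_\sigma$ globally (via the set $O_\sigma=\{\delta\in C_\sigma N_\sigma\mid\sigma\delta\in\tilde C\}$) instead of your slice-by-slice treatment followed by $M_\sigma$-spreading. The one point you should make explicit is why $\tilde C\cap\mu N$ is non-empty (hence dense) to begin with: it is because $CN=[\mu N]_P$ and $\tilde C$ is invariant under $P$-conjugation, exactly as in the paper's proof of Proposition~\ref{defind}(iv).
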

Note that $P_\sigma$ is a parabolic subgroup of $G_\sigma$ with Levi component $M_\sigma$ and unipotent radical~$N_\sigma$.
\begin{proof}
Let $O_\sigma=\{\delta\in C_\sigma N_\sigma\mid \sigma\delta\in\tilde C\}$. This is the intersection of an open subset of $\sigma^{-1}CN$ with $C_\sigma N_\sigma$, hence open therein. All elements of $O_\sigma$ are unipotent and conjugate under~$G_\sigma$. It remains to show that $O_\sigma$ is not empty. Take $\mu\in C$ and $n\in N$ such that $\mu n\in\tilde C$. We may assume that $\mu=\sigma\nu$ with $\nu\in C_\sigma$. By Lemma~\ref{gN}, there exist $n_1\in N$ and $n_2\in N_\sigma$ such that $\mu n=n_1\mu n_2n_1^{-1}$, hence $\mu n_2\in\tilde C$ and $\nu n_2\in O_\sigma$.
\end{proof}

\begin{prop}\label{assocpar}
Let $G$ be a connected reductive group.
\begin{enumerate}
\item[(i)] If $P_1$ and $P_2$ are parabolic subgroups of $G$ which have a Levi component $M$ in common, then $\Ind_P^G(C)=\Ind_{P'}^G(C)$ for any $M$-conjugacy class $C$ in~$M$. Thus, we can denote the induced class by $\Ind_M^G(C)$.
\item[(ii)] Let $M\subset M'$ be Levi subgroups of $G$. If $C$ is a conjugacy class of~$M$, then
\[
\Ind_M^G(C)=\Ind_{M'}^G(\Ind_M^{M'}(C)).
\]
\end{enumerate}
\end{prop}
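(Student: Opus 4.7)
For~(i), the plan is to reduce to unipotent conjugacy classes via Lemma~\ref{desc} and then adapt Borho's method. Let $\sigma$ be the semisimple component of some element of~$C$; then $(P_1)_\sigma$ and $(P_2)_\sigma$ are parabolic subgroups of $G_\sigma$ sharing the Levi $M_\sigma$, while $C_\sigma$ is a unipotent conjugacy class of $M_\sigma$. Since a conjugacy class of~$G$ is determined by its semisimple class together with, for a fixed semisimple representative, the associated unipotent class, Lemma~\ref{desc} reduces the equality $\Ind_{P_1}^G(C)=\Ind_{P_2}^G(C)$ to its unipotent counterpart $\Ind_{(P_1)_\sigma}^{G_\sigma}(C_\sigma)=\Ind_{(P_2)_\sigma}^{G_\sigma}(C_\sigma)$. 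In the unipotent case I would exploit that any two parabolics with a common Levi are linked by a chain whose successive members differ by a single reflection of the relative Weyl group $N_G(M)/M$, reducing the claim to a pair of opposite parabolics in a subgroup of semisimple rank one, where a direct $\mathrm{SL}_2$-type computation suffices.

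For~(ii), using~(i) I would choose parabolics compatibly: fix $P'=M'N'$ in~$G$, fix a parabolic $Q=MN_M$ of~$M'$, and set $P=QN'$, a parabolic of~$G$ with Levi~$M$ and unipotent radical $N=N_MN'$. Write $C^*=\Ind_M^{M'}(C)$ and $\tilde D=\Ind_{M'}^G(C^*)$, and put $U=C^*\cap CN_M$, open dense in $CN_M$. Because $M'\cap N'=\{1\}$, the multiplication maps $CN_M\times N'\to CN$ and $C^*\times N'\to C^*N'$ are bijective, so $UN'$ is open dense in $CN$ and satisfies $UN'\subset C^*N'$.

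The crux is to produce a point of $\tilde D$ inside $CN$. Fix any $\mu^*\in U$; the transitivity of the $M'$-conjugation action on $C^*$, combined with the $M'$-invariance of both $\tilde D$ and $C^*N'$, lets me $M'$-conjugate any point of the non-empty set $\tilde D\cap C^*N'$ into the coset $\mu^*N'$, yielding a point of $\tilde D\cap\mu^*N'\subset\tilde D\cap CN$. Hence $\tilde D\subset[CN]_G$, whence $\overline{\tilde D}\subset\overline{[CN]_G}$. Since $UN'$ is dense in $CN$ and contained in $C^*N'$, one also has $\overline{[CN]_G}=\overline{[UN']_G}\subset\overline{[C^*N']_G}=\overline{\tilde D}$, so the three closures coincide. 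Both $\Ind_P^G(C)$ and $\tilde D$ are the unique open dense $G$-orbit in this common irreducible closed subvariety of~$G$, and hence they agree. The main obstacle is the unipotent rank-one reduction in~(i), which carries the real geometric content; part~(ii) then becomes a short formal consequence, modulo the $M'$-equivariance trick that secures $\tilde D\cap CN\ne\emptyset$.
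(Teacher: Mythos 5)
Part~(ii) of your plan is essentially complete and correct: the trick of $M'$-conjugating a point of $\Ind_{P'}^G(C^*)\cap C^*N'$ into a fixed coset $\mu^*N'\subset CN$ (legitimate because $M'$ acts transitively on $C^*$ and normalises $N'$), followed by the identification of the three closures $\overline{\tilde D}=\overline{[CN]_G}=\overline{[C^*N']_G}$ and the observation that two conjugacy classes dense in the same irreducible closed set coincide, is a valid proof of transitivity for the nested parabolics $P\subset P'$; it is close to the argument of \cite[1.7]{LS} that the paper invokes. The reduction of~(i) to the unipotent case via Lemma~\ref{desc} also matches the paper exactly.

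The genuine gap is the unipotent case of~(i), which you yourself flag as carrying ``the real geometric content'' and then do not supply. The gallery reduction is in principle available: any two parabolics with Levi component $M$ are joined by a chain of adjacent ones, and for adjacent $P_1,P_2$ one passes to the parabolic they generate, whose Levi $M'$ contains $M$ as a maximal Levi, using the nested-parabolic transitivity of~(ii) --- note that this forces you to prove that special case of~(ii) \emph{before}~(i), a point you must make explicit to avoid circularity (also, adjacent parabolics are not in general related by a reflection in $N_G(M)/M$; that group may act trivially on the relevant pair of chambers). But the base case you arrive at is $\Ind_Q^{M'}(C)=\Ind_{Q^-}^{M'}(C)$ for \emph{opposite} parabolics $Q,Q^-$ of an arbitrary reductive group $M'$ with maximal Levi $M$. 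Relative rank one does not make this an $\mathrm{SL}_2$ problem: $N_Q$ can be any unipotent radical of a maximal parabolic, and $Q^-$ need not even be conjugate to $Q$ (the relative Weyl group of $M$ may be trivial), so no ``direct $\mathrm{SL}_2$-type computation'' is on offer; this opposite-parabolic case is essentially the whole difficulty. The paper avoids it entirely by Borho's sheet argument: using completeness of the flag variety one shows $[A\bar CN]_G$ is closed and is the closure of $[A\reg C]_G$ (with $A$ the centre of $M$), then intersects with the unipotent variety $Y$ to get $\overline{\tilde C}=Y\cap\overline{[AC]_G}$, which is visibly independent of the choice of parabolic, and recovers $\tilde C$ as the dense class in this closure. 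You need either that argument or an actual proof of the opposite-parabolic case; as written, the crux is deferred to an unspecified computation that does not exist in the advertised form.
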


In the proof we use the generalised flag variety~$\Pf$, whose points are the parabolic subgroups of $G$ conjugate to a given one. It is a homogeneous space for $G$, isomorphic to $G/P$ for any~$P\in\Pf$. Moreover, we have the variety
\[
\X=\{(P,g)\in\Pf\times G\mid g\in P\},
\]
on which $G$ acts by conjugation as well, with equivariant projections to $\Pf$ and to~$G$. The preimage of a $G$-invariant subset $S$ of $G$ is denoted by~$\X_S$, which generalises the Grothendieck-Springer resolution of the unipotent variety.

\begin{proof}[Proof of Proposition~\ref{assocpar}.] If $R$ is a $P$-invariant closed subset of~$P$, then the set $\{(g,h)\in G\times G\mid g^{-1}hg\in R\}$ is closed and invariant under right $P$-translations in the first component, hence the preimage of a closed subset of~$\Pf\times G$, which is easily seen to be equal to $\X_S$ for $S=[R]_G$. Its image $S$ under the projection to $G$ is closed because $\Pf$ is complete~\cite[6.2]{Sp}.

If $A$ is the centre of~$M$ and $A\reg$ the subset of those elements for which $G_a=M$, the preceding remark shows that, for any conjugacy class $C$ of~$M$, the set $[A\bar CN]_G$ is closed. It contains the dense subset $Z=[A\reg CN]_G$, which by Proposition~\ref{defind}(ii) equals~$[A\reg C]_G$. It follows that $[A\bar CN]_G$ is the closure of $[AC]_G$. Similarly, we see that $[\bar CN]_G$ is closed, hence equal to the closure of~$\tilde C$, because the latter has to contain the closure of $\tilde C\cap CN$.

Now we suppose that $C$ is unipotent. Let $Y$ be the unipotent subvariety of~$G$. Then $Y\cap A\bar CN=\bar CN$ and hence $[\bar CN]_G=Y\cap[A\bar CN]_G=Y\cap\overline{[AC]_G}$. This shows that the closure of~$\tilde C$, and hence $\tilde C$ itself, is independent of~$P$.

Next we consider a general conjugacy class~$C$. Let $P$ and $P'$ be as in~(i). By Lemma~\ref{desc}, we have $\sigma\Ind_{P_\sigma}^{G_\sigma}(C_\sigma)\subset\Ind_P^G(C)$ and $\sigma\Ind_{P'_\sigma}^{G_\sigma}(C_\sigma)\subset\Ind_{P'}^G(C)$. We already know that $\Ind_{P_\sigma}^{G_\sigma}(C_\sigma)=\Ind_{P'_\sigma}^{G_\sigma}(C_\sigma)$. This shows that $\Ind_P^G(C)$ and $\Ind_{P'}^G(C)$ have non-empty intersection, hence they must coincide.

Assertion~(ii) is easily proved as in~\cite[1.7]{LS}.
\end{proof}

\begin{theorem}\label{}
In the situation of Proposition~\ref{defind}(i), the following is true.
\begin{enumerate}
\item[(i)] $\codim_G\tilde C=\codim_M C$.
\item[(ii)] For every $\gamma\in\tilde C$ and $\mu\in C$ we have $\dim G_\gamma=\dim M_\mu$.
\item[(iii)] The set $\tilde C\cap CN$ is a $P$-conjugacy class.
\item[(iv)] For every $\delta\in\tilde C\cap CN$ we have $G_\delta\subset P$.
\end{enumerate}
\end{theorem}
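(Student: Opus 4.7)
The plan is to establish~(iii) and~(iv) simultaneously by proving that the $G$-equivariant morphism $\Psi : \mathcal Z \to G$, $[g,\delta] \mapsto g\delta g^{-1}$, from the irreducible incidence variety $\mathcal Z := G \times^P CN$ of dimension
\[
\dim \mathcal Z = \dim G - \dim P + \dim CN = \dim G - \codim_M C
\]
to $G$ is birational onto the closure of~$\tilde C$. This single statement encodes (iii) and~(iv) together, and implies~(i) and~(ii) by dimension count.

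To see the encoding, note that since $\tilde C \cap CN$ is open and dense in~$CN$, the subvariety $\Psi^{-1}(\tilde C) = G \times^P (\tilde C \cap CN)$ is open dense in~$\mathcal Z$ and $\Psi$ maps it onto~$\tilde C$. A direct coset computation shows that the fibre of~$\Psi$ over $\delta \in \tilde C \cap CN$ has dimension
\[
\dim \Psi^{-1}(\delta) = \dim(\tilde C \cap CN) + \dim G^\delta - \dim P = \dim G^\delta - \codim_M C.
\]
This fibre reduces to the singleton $\{[1,\delta]\}$ precisely when $\{g \in G : g^{-1}\delta g \in CN\} = P$, and that condition is easily seen to be the conjunction of~(iii) (so that $\tilde C \cap CN = P \cdot \delta$) and~(iv) (so that $G_\delta \subset P$). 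Birationality of~$\Psi$ then yields $\dim \tilde C = \dim \mathcal Z$, giving~(i), and $\dim G_\gamma = \dim G - \dim \tilde C = \codim_M C = \dim M_\mu$ for any $\gamma \in \tilde C$, giving~(ii).

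The main obstacle is establishing this birationality. I would reduce to the case of unipotent $C$ via the semisimple reduction of Lemma~\ref{desc}: by Lemma~\ref{gN} we may replace $\delta \in \tilde C \cap CN$ by an $N$-conjugate in Jordan form $\delta = \sigma\nu$, so that $(G^\delta)^0 \subset G_\sigma$, and the question restricts to the reductive group~$G_\sigma$ with parabolic~$P_\sigma$, Levi~$M_\sigma$, and unipotent induced class $\Ind_{P_\sigma}^{G_\sigma}(C_\sigma)$. In the unipotent case I would adapt Borho's direct argument from~\cite{Bo} to the group setting: a tangent-space computation on the $P_\sigma$-orbit through~$\nu$ in $C_\sigma N_\sigma$, combined with the duality between $\uf_\sigma$ and $\g_\sigma/\mathfrak p_\sigma$ supplied by an invariant bilinear form, should simultaneously yield that $P_\sigma \cdot \nu$ is open in $C_\sigma N_\sigma$ and that $(G_\sigma)_\nu \subset P_\sigma$, delivering birationality in the unipotent case and hence, via the reduction, in general.
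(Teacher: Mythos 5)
Your overall skeleton — deriving (ii)--(iv) from the dimension statement (i) by a fibre count on $G\times^P CN$, and reducing (i) to the unipotent case via Lemma~\ref{desc} — agrees with the paper. But there are two genuine gaps. First, the birationality of $\Psi:G\times^P CN\to\overline{\tilde C}$ that you take as your target is false in general, and your claimed equivalence with (iii) and (iv) confuses the full centraliser $G^\delta$ with its identity component $G_\delta$. The fibre $\Psi^{-1}(\delta)\cong\{g\in G: g^{-1}\delta g\in CN\}/P$ is a singleton if and only if (iii) holds \emph{and} $G^\delta\subset P$; granting (iii) and (iv) it is the finite set $G^\delta/P^\delta$, and the index $[G^\delta:P^\delta]$ — the degree of the generalised Springer/moment map — exceeds $1$ already for suitable Richardson classes outside type $A$. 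What you can legitimately aim for is generic \emph{finiteness} of $\Psi$, which is equivalent to (i); and once (i) is known, the identity
\[
\codim_G\tilde C=\codim_MC+\codim_{CN}O+\codim_{G_\delta}P_\delta
\]
for the $P$-orbit $O$ of $\delta\in\tilde C\cap CN$ forces both non-negative correction terms to vanish, which is exactly how the paper obtains (iii) and (iv).

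Second, and more seriously, the unipotent case of (i) — the only hard step — is not actually proved. The tangent-space computation plus the duality between $\mathfrak n$ and $\mathfrak g/\mathfrak p$ is Richardson's argument for $C=\{1\}$; for a general unipotent class $C$ it shows at best that openness of the $P$-orbit of $\delta$ in $CN$ is equivalent to the condition $\mathfrak n\subset(\id-\Ad(\delta))\mathfrak p$ defining $P\infl$, and it gives no reason why a generic element of $CN$ should satisfy that condition. The missing idea (Borho's, transferred to the group by the paper) is a degeneration from the regular part of the centre $A$ of $M$: on $Z=[A\reg C]_G$ the statement is trivial by Proposition~\ref{defind}(ii); semicontinuity of orbit dimension on $\bar Z=[A\bar CN]_G$ yields $\dim\tilde C\le\dim C+\dim G-\dim M$, and the reverse inequality comes from the fibres of the adjoint quotient (Steinberg) map restricted to $\bar Z$, whose fibre over $\chi(C)$ is $\overline{\tilde C}$ and whose fibres over $\chi(aC)$, $a\in A\reg$, have the known dimension, the union of fibres of minimal dimension being open. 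Without some substitute for this degeneration, the core of the theorem remains unproved.
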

In the proof, we use the natural projection $\chi$ from $G$ to the adjoint quotient $\Int(G)\backslash\!\backslash G$ (see~\cite[ch.~3]{Hu}), which generalises the Steinberg map. Two elements have the same image under $\chi$ if and only if their semisimple components are conjugate.

\begin{proof}
Properties (i) and~(ii) are equivalent, because $\codim_MC=\dim M_\mu$ and $\codim_G\tilde C=\dim G_\gamma$ for $\mu\in C$ and $\gamma\in\tilde C$. If $O$ is the $P$-orbit of an element $\delta\in CN$, then $\codim_M C+\codim_{CN}O=\codim_PO=\dim P_\delta$. If, moreover, $\delta\in\tilde C$, then
\[
\codim_G\tilde C=\codim_MC+\codim_{CN}O+\codim_{G_\delta}P_\delta.
\]
Therefore, property (i) implies that $O$ is open in~$CN$ and $P_\delta$ is open in~$G_\delta$, which is tantamount to (iii) and~(iv).

It remains to prove statement~(i). As we have seen in the proof of Proposition~\ref{assocpar}(i), the closure of $Z=[A\reg C]_G$ in $G$ is the irreducible variety $[A\bar CN]_G$. Property~(ii) is trivially satisfied for $C$ replaced by~$aC$ with $a\in A\reg$, hence (i) is also true for these~$a$, i.~e.,
\[
\dim\Ind_P^G(aC)=\dim C+\dim G-\dim M.
\]
Since the union of orbits of maximal dimension in $\bar Z$ is an open subset, it meets~$Z$, and therefore
\[
\dim\tilde C\le\dim C+\dim G-\dim M.
\]

Now assume that $C$ is unipotent. We consider the restriction $\chi_Z$ of the adjoint quotient map $\chi$ to the $G$-invariant subvariety~$\bar Z$ and its fibres $Y_a=\chi_Z^{-1}(\chi(aC))$ for $a\in A$. We know from the proof of Proposition~\ref{assocpar}(i) that $Y_1$ is the closure of~$\tilde C$. For $a\in A\reg$, we have
\[
Y_a\cap A=\{w(a)\mid w\in W\},
\]
where $W$ is the set of automorphisms of $M$ induced by the elements of the normaliser of $M$ in~$G$. The proof of Proposition~\ref{assocpar}(i) shows that, for these~$a$,
\[
Y_a=\bigcup_{w\in W}\overline{\Ind_P^G(w(a)C)}.
\]
Thus, all components of fibres contained in~$Z$ have equal dimension determined above. Since the union of fibres of minimal dimension is open in~$\bar Z$, it meets~$Z$, and therefore
\[
\dim\tilde C\ge\dim C+\dim G-\dim M.
\]

Finally, let $C$ be arbitrary. Choose $\sigma\in C_{\mathrm s}$, $\mu\in C_\sigma$ and $\gamma\in\tilde C_\sigma$. For the induction of the unipotent orbit $C_\sigma$ from $M_\sigma$ to~$G_\sigma$, assertion~(ii) is already proved. In view of $(G_\sigma)_\gamma=G_{\sigma\gamma}$ and $(M_\sigma)_\mu=M_{\sigma\mu}$, it is equivalent to the same assertion about the induction of $C$ from $M$ to~$G$.
\end{proof}

Theorem~\ref{codim}(iii) shows that the process of induction actually proceeds in two steps. We first inflate a conjugacy class $C$ of $M$ to a conjugacy class~$O$ of $P$ under the epimorphism $\pi:P\to M$ and then induce $O$ from $P$ to~$G$. We mention in passing that a similar remark applies to parabolic induction of group representations.

\begin{prop}
Given a parabolic subgroup~$P$ with unipotent radical~$N$, the subset
\[
P\infl=\{\delta\in P\mid\mathfrak n\subset(\id-\Ad(\delta))\mathfrak p\}
\]
is dense and open. It is the union of the inflations to $P$ of all conjugacy classes of a Levi component~$M$.
\end{prop}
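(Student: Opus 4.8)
The plan is to read everything off from the identification of orbit tangent spaces. For $\delta\in P$, under the right-translation identification $T_\delta P\cong\mathfrak p$ the tangent space to the conjugacy class $[\delta]_P$ at $\delta$ is $(\id-\Ad(\delta))\mathfrak p$, and (since $\pi$ is a homomorphism and, via a Levi decomposition, a trivial $N$-bundle) the tangent space to the irreducible fibre $\pi^{-1}(C)=CN$, where $C=[\pi(\delta)]_M$, is $(d\pi)^{-1}\bigl((\id-\Ad(\pi(\delta)))\mathfrak m\bigr)$. Both of these subspaces of $\mathfrak p$ map onto $(\id-\Ad(\pi(\delta)))\mathfrak m$ under $d\pi$, and $\ker d\pi=\mathfrak n$, so they coincide if and only if $(\id-\Ad(\delta))\mathfrak p\supseteq\mathfrak n$, i.e. if and only if $\delta\in P\infl$. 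Since $[\delta]_P$ is locally closed in the irreducible variety $\pi^{-1}(C)$, this says: $\delta\in P\infl$ if and only if $[\delta]_P$ is dense, equivalently open, in $\pi^{-1}([\pi(\delta)]_M)$.

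Given this criterion the description of $P\infl$ is immediate. For each conjugacy class $C$ of $M$ the fibre $\pi^{-1}(C)=CN$ is irreducible and contains, by Proposition~\ref{defind}(i) and Theorem~\ref{codim}(iii), its inflation $\tilde C\cap CN$ as a dense $P$-orbit, necessarily the only one (two dense orbits would meet and hence agree). Thus $\delta\in P\infl$ exactly when $[\delta]_P=\tilde C\cap CN$ for $C=[\pi(\delta)]_M$, so $P\infl$ is the disjoint union, over all conjugacy classes $C$ of $M$, of their inflations to $P$. Note also that $P\infl$ is stable under $P$-conjugation, because $\Ad(g)$ preserves $\mathfrak p$ and $\mathfrak n$ for $g\in P$.

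For density it suffices to find a dense subset of $P$ inside $P\infl$. If $C$ is a regular semisimple class of $M$, then $N_\mu=\{1\}$ for $\mu\in C$, so by Proposition~\ref{defind}(ii) the fibre $\pi^{-1}(C)$ is a single $P$-conjugacy class and hence equals $\tilde C\cap CN\subseteq P\infl$. Letting $C$ vary we get $\pi^{-1}(M\reg)\subseteq P\infl$, and $\pi^{-1}(M\reg)$ is dense in $P$ since $M\reg$ is dense in $M$ and $\pi$ is a trivial $N$-bundle. (Alternatively one verifies directly that a regular semisimple $\mu$ lies in $P\infl$: its centraliser in $G$ is a maximal torus contained in $M$, and $\Ad(\mu)$ fixes no nonzero vector in any root space, so $(\id-\Ad(\mu))\mathfrak p$ is the span of all root spaces in $\mathfrak p$ and therefore contains $\mathfrak n$.)

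The remaining point, and the one I expect to be the main obstacle, is openness of $P\infl$ in $P$. From the snake-lemma exact sequence for $\id-\Ad(\delta)$ acting on $0\to\mathfrak n\to\mathfrak p\to\mathfrak m\to0$ one obtains $\dim\mathfrak p^\delta\ge\dim\mathfrak m^{\pi(\delta)}$, with equality precisely when $\delta\in P\infl$; since moreover $\dim\mathfrak g^\delta\ge\dim\mathfrak p^\delta$, with equality when $\delta\in P\infl$ by Theorem~\ref{codim}(iv), we also have $P\infl=\{\delta:\dim\mathfrak g^\delta=\dim\mathfrak m^{\pi(\delta)}\}$. The open set where $\Ad(\delta)$ has no eigenvalue $1$ on $\mathfrak n$ lies in $P\infl$ (there $\mathfrak n=(\id-\Ad(\delta))\mathfrak n$), so the difficulty is concentrated near points with $\mathfrak n^\delta\ne0$; to treat these I would use the Jordan decomposition $\delta=\sigma\nu$. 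Splitting $\mathfrak p$ into generalised $\Ad(\sigma)$-eigenspaces gives $(\id-\Ad(\delta))\mathfrak p=(\id-\Ad(\sigma))\mathfrak p\oplus(\id-\Ad(\nu))\mathfrak p_\sigma$, so $\delta\in P\infl$ if and only if $\mathfrak n_\sigma\subseteq(\id-\Ad(\nu))\mathfrak p_\sigma=[\log\nu,\mathfrak p_\sigma]$, a condition on the nilpotent $\log\nu$ in the Lie algebra of the parabolic $P_\sigma$ of $G_\sigma$ (whose unipotent radical is $N_\sigma$, as noted after Lemma~\ref{desc}). This reduces openness to the analogous nilpotent statement inside $P_\sigma$ — where a Jacobson--Morozov $\mathfrak{sl}_2$-triple for $\log\nu$ with semisimple member in $\mathfrak p_\sigma$ controls $[\log\nu,\mathfrak p_\sigma]$ through its graded pieces — together with the bookkeeping needed to propagate openness across the variation of $\sigma$: concretely, one must show that the non-negative function $\dim\mathfrak p^\delta-\dim\mathfrak m^{\pi(\delta)}$ is upper semicontinuous, i.e.\ balance the upper semicontinuity of $\dim\mathfrak g^\delta$ against that of $\dim\mathfrak m^{\pi(\delta)}$. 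That balancing is the delicate part of the argument.
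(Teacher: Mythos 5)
Your first two paragraphs reproduce the paper's own argument almost verbatim: the paper likewise computes the differential of the orbit map $\phi_\delta(p)=p\delta p^{-1}$ at the identity to be $\phi_\delta'=\id-\Ad(\delta)$ and observes that $\mathfrak n\subset\phi_\delta'\mathfrak p$ is equivalent to $T_\delta(D)=T_\delta(CN)$, hence to $D$ being dense in $CN$; this identifies $P\infl$ with the union of the inflations. Your density argument via regular semisimple classes and Proposition~\ref{defind}(ii) is a correct (and self-contained) variant; the paper instead gets density for free from openness, since the union of inflations is non-empty and $P$ is irreducible.

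The genuine gap is openness, which you yourself flag as unresolved, and your proposed route is the one that does not work as stated: both $\dim\mathfrak p^\delta$ and $\dim\mathfrak m^{\pi(\delta)}$ are upper semicontinuous, and a difference of upper semicontinuous functions has no semicontinuity in general, so "balancing" them is not bookkeeping — the obstruction is precisely that $\dim\mathfrak m^{\pi(\delta)}$ can jump upward in a limit, and nothing in your sketch rules out a sequence $\delta_i\notin P\infl$ converging to $\delta\in P\infl$. The paper avoids centraliser dimensions entirely and argues pointwise and linearly: for each $X$ in a (finite) basis of $\mathfrak n$, the condition $X\in\phi_\delta'\mathfrak p$ is the solvability of a system of linear equations whose coefficients are regular functions of $\delta$, and $P\infl$ is the intersection of these finitely many loci, each asserted to be open. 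If you want to complete your write-up, this is the step to supply; note that even the paper's formulation is terse, since solvability of a parametrised system is the rank condition $\mathrm{rk}\,(A(\delta)\mid X)=\mathrm{rk}\,A(\delta)$, which is not open for an arbitrary family, so one must use the specific structure of $\id-\Ad(\delta)$ acting on the filtration $\mathfrak n\subset\mathfrak p$ (for instance your correct reduction, via the Jordan decomposition, to the condition $\mathfrak n_\sigma\subset(\id-\Ad(\nu))\mathfrak p_\sigma$ could be the starting point of such an argument, but it still has to be propagated across varying $\sigma$). As it stands, the proposal proves the second sentence of the Proposition and density, but not openness.
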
 
\begin{proof}
For every $X\in\mathfrak n$, the condition $X\in\phi_\delta'\mathfrak p$ can be expressed as the solvability of a system of linear equations, whose coefficients are regular functions of~$\delta$. Since $P\infl$ can be defined by finitely many such conditions, it is open. If we can prove the last assertion, it will follow that $P\infl$ is not empty.

Let $D$ be the conjugacy class of an arbitrary element $\delta$ in $P$, so that $\pi(D)$ is the conjugacy class $C$ of $\mu=\pi(\delta)$ in~$M$. Then we have a surjective tangential map $T_\delta(D)\to T_\mu(C)$ with kernel~$T_\delta(\mu N)$. Let $\phi_\delta:P\to D$ be the orbit map defined by $\phi_\delta(p)=p\delta p^{-1}$. An easy computation shows that the tangential map of $\phi_\delta$ at the neutral element $e$ is $\phi_\delta'=\id-\Ad(\delta)$ if we identify the tangent spaces to $P$ at $e$ and~$\delta$ with $\mathfrak p$ by right translation. The range of $\phi_\delta'$ is $T_\delta(D)$. Thus the condition $\mathfrak n\subset \phi_\delta'\mathfrak p$ is equivalent to $T_\delta(\mu N)\subset T_\delta(D)$. Together with the aforementioned surjectivity it implies that $T_\delta(D)=T_\delta(CN)$ and hence that $D$ is dense in~$CN$. The converse is clear.
\end{proof}
Given $\gamma\in G$, the action of $G$ on a generalised flag variety $\Pf$ restricts to an action of~$G_\gamma$ on the subsets $\Pf_\gamma=\{P\in\Pf\mid \gamma\in P\}$ and $\Pf_\gamma\infl=\{P\in\Pf\mid \gamma\in P\infl\}$. The former set can also be defined as the set of fixed points of~$\gamma$ in $\Pf$ and has been studied in \cite{St} and~\cite{HO}, but we are more interested in~$\Pf_\gamma\infl$. The preimage in $P$ of the union of conjugacy classes in $P/N$ which induce up to $C$ will be denoted by~$C_P$ and the action of a group on itself by inner automorphisms by~$\Int$.
\begin{prop}
For any conjugacy class $C$ of~$G$, any $\gamma\in C$ and $P\in\Pf_\gamma\infl$ we have
\[
|\Int(P/N)\backslash(C_P/N)|=|G^\gamma\backslash\Pf_\gamma\infl|.
\]
\end{prop}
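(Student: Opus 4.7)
The plan is to construct an explicit bijection between $G^\gamma\backslash\Pf_\gamma\infl$ and $\Int(P/N)\backslash(C_P/N)$. Any $Q\in\Pf_\gamma\infl$ can be written as $Q=gPg^{-1}$, and since parabolic subgroups are self-normalising the coset $gP$ is uniquely determined by~$Q$. Then $g^{-1}\gamma g$ lies in $P\infl$, and by the preceding proposition its image in $M=P/N$ lies in a unique $M$-conjugacy class $[\mu]_M$ whose inflation is exactly the $P$-conjugacy class of $g^{-1}\gamma g$. This inflation induces via~$P$ to $[g^{-1}\gamma g]_G=C$, so $[\mu]_M\subset C_P/N$. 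Replacing $g$ by $gp$ with $p\in P$ conjugates $g^{-1}\gamma g$ by $p^{-1}$ and leaves $[\mu]_M$ unchanged, yielding a well-defined map
\[
\Phi:\Pf_\gamma\infl\longrightarrow\Int(P/N)\backslash(C_P/N),\qquad gPg^{-1}\mapsto[\pi(g^{-1}\gamma g)]_M,
\]
with $\pi:P\to M$ the canonical projection.

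Next I would verify that $\Phi$ descends to a bijection on $G^\gamma$-orbits. $G^\gamma$-invariance of $\Phi$ is immediate from the formula. For injectivity, suppose $\Phi(g_1Pg_1^{-1})=\Phi(g_2Pg_2^{-1})$. Since the inflation map is a bijection between $M$-classes in $M$ and those $P$-classes meeting $P\infl$, the elements $g_1^{-1}\gamma g_1$ and $g_2^{-1}\gamma g_2$ must be $P$-conjugate, say $p^{-1}g_1^{-1}\gamma g_1 p=g_2^{-1}\gamma g_2$. A direct check then shows that $h:=g_1pg_2^{-1}$ lies in $G^\gamma$ and satisfies $h^{-1}(g_1Pg_1^{-1})h=g_2Pg_2^{-1}$, placing the two parabolics in the same $G^\gamma$-orbit. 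For surjectivity, fix an $M$-class $[\nu]_M\subset C_P/N$ and let $D_P\subset P$ be its inflation, which is contained in~$C$ by the construction of $C_P$ and part~(iii) of the preceding theorem. Pick $\delta\in D_P$, write $\delta=g\gamma g^{-1}$, and set $Q:=g^{-1}Pg$. The subset $P\infl$ is defined intrinsically, so conjugation by $g^{-1}$ carries $P\infl$ onto $Q\infl$; since $\delta\in P\infl$, we obtain $\gamma\in Q\infl$, hence $Q\in\Pf_\gamma\infl$, and by construction $\Phi(Q)=[\pi(\delta)]_M=[\nu]_M$.

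The main obstacle is not conceptual depth but the bookkeeping of several simultaneous conjugations, together with extracting cleanly from the preceding proposition the underlying bijection between $M$-classes of $M$ and $P$-classes meeting $P\infl$. Once that bijection and the self-normalising property of parabolic subgroups are in hand, the rest is formal.
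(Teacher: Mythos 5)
Your proof is correct and is essentially the paper's argument in unpacked form: the paper counts $G$-orbits on the incidence variety $\X_C\infl$ in two ways, obtaining canonical bijections to $G^\gamma\backslash\Pf_\gamma\infl$ and to $\Int(P/N)\backslash(C_P/N)$, and your explicit map $\Phi$ is precisely the composite of these two bijections, with your well-definedness, injectivity and surjectivity checks corresponding to the conjugacy statements the paper verifies on $\X_C\infl$. The only point worth keeping explicit in your write-up is the one you already flag: that the dense $P$-class in $[\mu]_M N$ is contained in the induced class (Theorem~3(iii)), which is what guarantees $\Phi$ lands in $C_P/N$ and that $D_P\subset C$ in the surjectivity step.
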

\begin{proof}
The subset
\[
\X\infl=\{(P_1,\gamma_1)\in\Pf\times G\mid \gamma_1\in P_1\infl\}
\]
of $\X$ is $G$-invariant, and the natural projection $\X\to\Pf$ maps $\X\infl$ to~$\Pf\infl$. The group $G$ will then also act on the preimage $\X_C\infl$ of the conjugacy class~$C\subset G$ under the $G$-equivariant projection $\X\infl\to G$.

Any element of~$\X_C\infl$ is conjugate to an element of the form $(P_1,\gamma)$, and $(P_1,\gamma)$ is conjugate to $(P_2,\gamma)$ if and only if $P_1$ and $P_2$ are $G^\gamma$-conjugate. This yields a canonical bijection
\[
G\backslash\X_C\infl\to G^\gamma\backslash\Pf_\gamma\infl.
\]
On the other hand, any element of~$\X_C\infl$ is conjugate to an element of the form~$(P,\gamma_1)$, and $(P,\gamma_1)$ is conjugate to $(P,\gamma_2)$ if and only if $\gamma_1$ and $\gamma_2$ are $P$-conjugate. This  yields a canonical bijection
\[
G\backslash\X_C\infl\to\Int(P/N)\backslash(C_P/N).\qedhere
\]
\end{proof}
If we combine this result with Theorem~\ref{defind}(iii) and the finiteness of the number of conjugacy classes of parabolic subgroups, we obtain the following result.
\begin{cor}
For a given element $\gamma\in G$, there are only finitely many parabolic subgroups $P$ such that $\gamma\in P\infl$.
\end{cor}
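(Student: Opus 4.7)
The strategy is to combine the three ingredients listed just above the corollary: finiteness of the set of $G$-conjugacy classes of parabolic subgroups, the preceding proposition, and Proposition~\ref{defind}(iii). Let $C$ denote the $G$-conjugacy class of~$\gamma$. Since there are only finitely many conjugacy classes of parabolic subgroups in~$G$, it suffices to fix a single generalised flag variety~$\Pf$ and show that $\Pf_\gamma\infl$ is finite.

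For this, I would assume $\Pf_\gamma\infl$ is non-empty and pick $P\in\Pf_\gamma\infl$. The preceding proposition gives
\[
|G^\gamma\backslash\Pf_\gamma\infl|=|\Int(P/N)\backslash(C_P/N)|,
\]
and by construction the right-hand side is precisely the number of conjugacy classes of $M=P/N$ whose induction via $P$ yields~$C$. By Proposition~\ref{defind}(iii), that number is finite, so $\Pf_\gamma\infl$ consists of finitely many $G^\gamma$-orbits.

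To upgrade this from finitely many orbits to finitely many points, I would invoke part~(iv) of the preceding theorem: for any $P\in\Pf_\gamma\infl$, the element $\gamma$ lies in the inflation to~$P$ of some conjugacy class of~$M$, hence $G_\gamma\subset P$. Since parabolic subgroups are self-normalising, $G_\gamma$ fixes $P$ under conjugation, so $G^\gamma$ acts on $\Pf_\gamma\infl$ through the finite component group $G^\gamma/G_\gamma$. Each $G^\gamma$-orbit is therefore finite, and so is $\Pf_\gamma\infl$.

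The main obstacle is precisely this last step: the preceding proposition by itself only controls the number of orbits, not of points, and bridging that gap requires both self-normalisation of parabolics and the finiteness of the component group of an algebraic centraliser. Once those are in hand, the rest of the argument is routine bookkeeping.
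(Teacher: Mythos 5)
Your argument is correct and follows the route the paper sketches, but you have also filled a genuine gap that the paper's one-line hint (combine the preceding proposition with Proposition~\ref{defind}(iii) and the finiteness of conjugacy classes of parabolics) glosses over. Those three ingredients alone only show that each $\Pf_\gamma\infl$ decomposes into finitely many $G^\gamma$-orbits; they say nothing about the size of an individual orbit, and $G^\gamma$ is in general infinite. Your upgrade step is exactly what is needed: for $P\in\Pf_\gamma\infl$ the element $\gamma$ lies in the inflation of some class of~$M$, which by the density argument coincides with $\tilde C\cap CN$, and part~(iv) of the dimension theorem then gives $G_\gamma\subset P$; self-normalisation of parabolics forces $G_\gamma$ to fix every point of $\Pf_\gamma\infl$, so $G^\gamma$ acts through its finite component group $G^\gamma/G_\gamma$ and every orbit is finite. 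One small wording slip: you write ``part~(iv) of the preceding theorem,'' but the theorem you mean is not the one immediately preceding the corollary (that is the proposition computing $|\Int(P/N)\backslash(C_P/N)|$); it is the earlier theorem on codimensions. With that reference corrected, the proof is complete and, if anything, more careful than the paper's own terse justification.
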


\section{Prehomogeneous varieties}

Let $G$ be a connected linear algebraic group acting regularly on an irreducible variety $V$. For a rational character $\chi$ of~$G$, we denote by $\mathcal M_\chi(V)$ the space of rational functions $f$ on $V$ such that $f(gx)=\chi(g)f(x)$ for any $g\in G$ and $x\in V$. The subset of regular functions with this property will be denoted by~$\Of_\chi(V)$. We denote the group of rational characters of $G$ by~$X(G)$ and the intersection of the kernels of all rational characters of $G$ by~$G^1$.

\begin{lemma}\label{toract}
Let $D$ be a diagonalisable group acting freely and separably on a quasi-affine variety $V$. Then, for each $\omega\in X(D)$ and each $\xi\in V$, there exists a $D$-invariant open neighbourhood $U$ of~$\xi$ such that $\Of_\omega(U)\ne0$.
\end{lemma}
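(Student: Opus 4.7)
The plan is to realise $V$ as a principal $D$-bundle over a quotient variety $Y := V/D$, identify the semi-invariants of weight $\omega$ on $\pi^{-1}(U_0)$ with sections of an associated line bundle $L_\omega$ on $U_0 \subset Y$, and then exploit the Zariski-local triviality of line bundles.

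First I would invoke the existence of the geometric quotient $\pi\colon V\to Y$. Since $D$ is diagonalisable (hence linearly reductive) and acts freely and separably, all stabilisers are scheme-theoretically trivial, every orbit has dimension $\dim D$, and the orbits are automatically closed in $V$ (an orbit closure minus the orbit would consist of strictly smaller-dimensional orbits, which do not exist). Standard GIT, or descent theory for torsors under diagonalisable groups, then furnishes $Y$ as an algebraic variety with $\pi$ a principal $D$-bundle. For each $\omega \in X(D)$ I would form the associated line bundle $L_\omega := V \times^D \mathbb{A}^1$, with $D$ acting on the second factor through $\omega^{-1}$, and verify the canonical identification
\[
\Gamma(U_0, L_\omega) \;=\; \Of_\omega(\pi^{-1}(U_0))
\]
for every open $U_0 \subset Y$. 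Because $L_\omega$ is invertible, it becomes trivial on a sufficiently small Zariski-open neighbourhood $U_0$ of $\pi(\xi)$; any trivialising section gives a nonzero element of the right-hand side, so $U := \pi^{-1}(U_0)$ is the desired $D$-invariant open neighbourhood.

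The main obstacle is the first step, since $V$ is only assumed quasi-affine (and possibly non-normal), so one cannot directly appeal to Sumihiro's theorem to produce $D$-stable affine charts around $\xi$. A more self-contained alternative would be to pass to the affine hull $\mathrm{Spec}\,\Of(V)$, on which $D$ still acts and in which the orbit $D\xi$—closed in $V$ by freeness—admits a closed $D$-invariant affine model; one then observes that the restriction map $\Of(\,\cdot\,) \to \Of(D\xi) \cong \Of(D)$ is surjective and $D$-equivariant, so that the character $\omega$, a nonzero element of the $\omega$-isotypic component $\Of(D)_\omega$, lifts to a nonzero semi-invariant of weight $\omega$ on some $D$-invariant open neighbourhood of $\xi$.
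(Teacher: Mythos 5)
Both of your routes stall at the same point, and the fallback you offer in the last paragraph asserts a surjectivity that is false in general. The orbit of $\xi$ is indeed closed in $V$ (your dimension argument is fine), but it need not be closed in the affine hull $W=\mathrm{Spec}\,\Of(V)$; there it is only locally closed. Hence the restriction map $\Of(W)\to\Of(D\xi)\cong\Of(D)$ is in general \emph{not} surjective, and the character $\omega$ need not lift to a regular semi-invariant near $\xi$. Already for $D=\GL(1)$ scaling $V=\mathbb{A}^2\setminus\{0\}$, the orbit of $(1,0)$ acquires the origin in its closure inside $W=\mathbb{A}^2$, and the weight $-1$ function $t\mapsto t^{-1}$ on the orbit does not extend to a polynomial: one is forced to pass to the invariant open set $\{x\ne0\}$ and use $1/x$. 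Your phrase ``admits a closed $D$-invariant affine model'' is precisely the missing step; producing a $D$-invariant affine open set containing the orbit as a closed subset is what Sumihiro's theorem would provide for normal $V$, which you rightly note is unavailable here. The paper's proof gets around this by extending $f(\xi d)=\omega(d)$ only as a \emph{rational} function $g/h$ on $W$, regular near $\xi$, and then exploiting the isotypic decomposition $\Of(W)=\bigoplus_\chi\Of_\chi(W)$ under the diagonalisable group: comparing isotypic components in the identity $h|_{\xi D}f=g|_{\xi D}$ along the free orbit yields $h_\chi|_{\xi D}f=g_{\chi+\omega}|_{\xi D}$ for some $\chi$ with $h_\chi(\xi)\ne0$; since $h_\chi$ is a semi-invariant, $U=\{x\in V\mid h_\chi(x)\ne0\}$ is automatically $D$-invariant and $g_{\chi+\omega}/h_\chi\in\Of_\omega(U)$ is nonzero at $\xi$. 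This replacement of $h$ by a single isotypic component is the idea your argument lacks.

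Your primary route has the further unresolved issue that you yourself flag: the existence of a scheme-theoretic geometric quotient $Y=V/D$ on which $L_\omega$ lives as an honest invertible sheaf. For a possibly non-normal quasi-affine $V$ this is not ``standard GIT'' (descent only gives an algebraic space a priori), and if you could cover $V$ by $D$-invariant affine charts to build $Y$ locally you would already be most of the way to the lemma. The paper's argument avoids quotients entirely and is the more economical path; I would recommend rewriting along those lines.
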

\begin{proof}
We consider the case of a right action. Since it is free and separable, the orbit map $D\to\xi D$ is bijective and separable. It is therefore birational, and by Zariski's main theorem, it is an isomorphism. We define a regular function $f$ on $\xi D$ by $f(\xi d)=\omega(d)$.

Let $W$ be the affine variety with coordinate algebra $\Of(W)=\Of(V)$. Then $V$ can be considered as an open subset of~$W$, and the action of $D$ extends to a regular action on~$W$. Since the $D$-orbit $\xi D$ is open in its closure, $f$ is the restriction of a regular function on an open subset of~$W$. That function may be written as the quotient of regular functions $g$ and $h$ on~$W$, so that $h|_{\xi D}f=g|_{\xi D}$.

As $D$ is diagonalisable, $\Of(W)$ is the direct sum of the spaces $\Of_\chi(W)$ over all $\chi\in X(D)$. Decomposing $g$ and $h$ accordingly, we find $\chi$ such that $h_\chi(\xi)\ne0$. As $D$ acts freely, we have $h_\chi|_{\xi D}f=g_{\chi+\omega}|_{\xi D}$, so that we may assume $h=h_\chi$, $g=g_{\chi+\omega}$. If we set $U=\{x\in V\mid h\ne0\}$, then $g/h\in\Of_\omega(U)$, $g(\xi)/h(\xi)=1$.
\end{proof}

\begin{lemma}\label{locsect}
Let $G$ be a connected linear algebraic group and $H$ a closed subgroup of~$G$. Then, for each $\omega\in X(H)$ and $g\in G$, there exists an $H$-invariant open neighbourhood $U$ of $g$ such that $\Of_\omega(U)\ne0$.
\end{lemma}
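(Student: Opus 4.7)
The plan is to reduce the statement to Lemma~\ref{toract} by passing to the largest diagonalisable quotient of~$H$. Let $H^1=\bigcap_{\chi\in X(H)}\ker\chi$; this is a closed normal subgroup of $H$, the quotient $D=H/H^1$ is diagonalisable with $X(D)=X(H)$, and $\omega$ descends to a character of~$D$.

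I would first verify that $G/H^1$ is quasi-affine. When $H$ is connected, $H^1$ contains the unipotent radical of $H$ and its reductive part is the derived subgroup of a Levi component of $H$, hence semisimple, so $H^1$ itself has no non-trivial rational characters. The observability theorem of Bialynicki-Birula, Hochschild and Mostow then ensures that $H^1$ is observable in~$G$, i.~e.\ that $G/H^1$ is quasi-affine. (For disconnected $H$ one argues similarly, using that $H^1/(H^1)^\circ$ is linearly reductive in characteristic zero.) Next, the induced right translation action of $D$ on $G/H^1$ is free by construction, and it is separable since the orbit map at $\xi=gH^1$ is a translate of the closed immersion $H/H^1\hookrightarrow G/H^1$.

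Lemma~\ref{toract} applied to $V=G/H^1$, $D$, $\omega$ and~$\xi$ then yields a $D$-invariant open neighbourhood $U'\subset G/H^1$ of~$\xi$ together with a non-zero $f'\in\Of_\omega(U')$. Pulling back under the quotient map $\pi\colon G\to G/H^1$ produces the $H$-invariant open neighbourhood $U=\pi^{-1}(U')$ of~$g$ and the required semi-invariant $f'\circ\pi\in\Of_\omega(U)$. I expect the main obstacle to be the quasi-affineness of $G/H^1$, or equivalently the observability of~$H^1$ in~$G$; beyond this structural input, the argument is a routine application of Lemma~\ref{toract} followed by pullback.
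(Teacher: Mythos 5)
Your reduction is exactly the paper's: pass to $H^1$, let the diagonalisable group $D=H/H^1$ act freely on $G/H^1$, apply Lemma~\ref{toract} at $\xi=gH^1$, and pull back along $G\to G/H^1$. The only point of divergence is how quasi-affineness of $G/H^1$ is obtained: the paper simply cites 5.5.9(2) of~\cite{Sp}, whereas you re-derive it via observability. Your argument for connected $H$ is fine (indeed $H^1=[L,L]\ltimes R_u(H)$ for a Levi component $L$, so $X(H^1)=1$ and the Chevalley/BBHM criterion applies), but the parenthetical for disconnected $H$ does not go through as stated: $H^1$ can then have non-trivial, even infinite, character group --- e.g.\ for $H=\GL(1)\rtimes\{\pm1\}$ with the inversion action one has $X(H)\cong\mathbb Z/2$ and $H^1=\GL(1)$ --- so "no non-trivial characters" is not the right hypothesis there, and "$H^1/(H^1)^\circ$ linearly reductive" does not by itself yield observability of~$H^1$. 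Since the quasi-affineness of $G/H^1$ is a standard citable fact (and is what the paper cites), this is a minor repair: either invoke the reference or handle the disconnected case by a genuine argument rather than "similarly". Everything else (freeness, separability in characteristic zero, the identification $X(D)\cong X(H)$, and the pullback of the semi-invariant) checks out.
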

Here, we let $H$ act on $G$ from the right-hand side.
\begin{proof}
By Theorem~5.5.5 of~\cite{Sp}, the quotients $G/H$ and $G/H^1$ exist, and by~5.5.9(2) of~\cite{Sp}, the latter is quasi-affine. Moreover, the diagonalisable group $D=H/H^1$ acts freely on~$G/H^1$, the quotient being~$G/H$.

Since $G$ is connected, the natural map $G\to G/H^1$ is separable. For each $\gamma\in G$, the orbit map $H\to\gamma H$ is clearly separable, and so is therefore the copmposition $H\to G/H^1$. Since the action of $H$ on $G/H^1$ factors through that of~$D$, it follows that, for $\xi=\gamma H^1$, the orbit map $D\to\xi D$ is separable.

The natural epimorphism $H\to D$ induces an isomorphism $X(D)\to X(H)$, hence the assertion follows from Lemma~\ref{toract}.
\end{proof}

A relative invariant on a $G$-variety~$V$ is by definition a nonzero element of $\mathcal M_\chi(V)$ for some character $\chi\in X(G)$ (which is then unique). This defines a homomorphism $\eta$ from the group of relative $G$-invariants on $V$, which we denote by~$X_G(V)$, to the group $X(G)$. If $V$ is a homogeneous space, then $\mathcal M_\chi(V)=\Of_\chi(V)$. We denote the Picard group of a variety $V$ by~$\Pic(V)$.

\begin{lemma}\label{exact}
If $G$ is a connected linear algebraic group and $H$ a closed subgroup, we have an exact sequence
\[
1\longrightarrow\GL(1)\longrightarrow X_G(G/H)
\stackrel{\eta}{\longrightarrow}X(G)
\longrightarrow X(H)
\stackrel{\lambda}{\longrightarrow}\Pic(G/H).
\] 
\end{lemma}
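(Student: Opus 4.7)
The plan is to first pin down the map $\lambda$ (the only one in the sequence not already implicit in the notation) and then verify exactness at each of the four interior positions, working left to right. Given $\omega\in X(H)$, I would define $\lambda(\omega)$ as the class of the line bundle $L_\omega$ on $G/H$ whose sections over an open set $\pi(U)$, for an $H$-invariant open $U\subset G$ with $\pi:G\to G/H$ the natural map, are the elements of $\Of_\omega(U)$. Lemma~\ref{locsect} guarantees a nonzero local section in some $H$-invariant neighbourhood of any $g\in G$, so $L_\omega$ is a genuine line bundle. The assignment is a homomorphism because $\Of_\omega(U)\cdot\Of_{\omega'}(U)\subset\Of_{\omega+\omega'}(U)$.

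Exactness at $\GL(1)$ is immediate, and exactness at $X_G(G/H)$ is formal: a relative invariant of trivial character is a $G$-invariant rational function on the homogeneous space $G/H$, hence a nonzero constant.

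For exactness at $X(G)$, in one direction: if $\chi|_H=1$, then the regular function $g\mapsto\chi(g)$ on $G$ is right $H$-invariant and descends to a nowhere-vanishing $f\in\Of_\chi(G/H)$ with $\eta(f)=\chi$. Conversely, if $f\in\mathcal M_\chi(G/H)$ is a nonzero relative invariant, the identity $f(gx)=\chi(g)f(x)$ together with the transitivity of $G$ on $G/H$ and the fact that $\chi$ never vanishes shows that $f$ is regular and nowhere zero; setting $x_0=eH$ and specialising to $g=h\in H$ (which fixes $x_0$) gives $f(x_0)=\chi(h)f(x_0)$, so $\chi(h)=1$.

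Exactness at $X(H)$ is where the real work sits. In the direction image~$\subset$~kernel, if $\omega=\chi|_H$ then $F(g)=\chi(g)$ defines a nowhere-zero global section of $L_\omega$, so $\lambda(\omega)=0$. The opposite inclusion is the step I expect to be the main obstacle. A trivialisation of $L_\omega$ produces a nowhere-zero regular function $F:G\to\GL(1)$ with $F(gh)=\omega(h)F(g)$. I would then invoke Rosenlicht's theorem on units of a connected linear algebraic group, which decomposes any such $F$ uniquely as $c\cdot\chi$ with $c\in\GL(1)$ and $\chi\in X(G)$. The equivariance then reduces to $\chi(h)=\omega(h)$ for all $h\in H$, exhibiting $\omega$ as the restriction of a character of $G$ and completing the proof.
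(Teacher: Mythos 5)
Your proposal is correct and follows essentially the same route as the paper: the same sheaf-theoretic definition of $\lambda$ via $\Of_\omega(\pi^{-1}U)$ with Lemma~\ref{locsect} supplying local freeness, and the same key input at the crucial step — the Rosenlicht/Weil theorem that a nowhere-vanishing regular function on a connected linear algebraic group is a constant times a character (the paper cites Theorem~2.2.2 of~\cite{We} for this). Your treatment of exactness at $X(G)$ is merely more explicit than the paper's, which dismisses it as obvious.
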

\begin{proof}
The exactness in the terms $\GL(1)$, $X_G(G/H)$ and $X(H)$ is obvious.

Let $\pi:G\to G/H$ be the natural projection. We have, for each character $\omega\in X(H)$, the sheaf on $G/H$ which associates to every open subset $U$ the $\Of(U)$-module $\Of_\omega(\pi^{-1}U)$. By Lemma~\ref{locsect}, this sheaf is locally free over the structure sheaf of~$G/H$, and we get a homomorphism $\lambda$ from $X(H)$ to~$\Pic(G/H)$.

Now suppose that $\omega$ induces a trivial line bundle over~$G/H$. Then there is a nowhere vanishing global section $s$, which by our definition is a regular function on~$G$. According to Theorem~2.2.2 of~\cite{We}, we have $s=c\chi$ for some $c\in\GL(1)$ and $\chi\in X(G)$. The definig property of $\Of_\omega(G/H)$ implies that $\chi(gh)=\chi(g)\omega(h)$, so that $\omega$ is the restriction of a character of~$G$. The converse is clear, and so the exactness in the term $X(H)$ is proved.
\end{proof}
We say that a homogeneous space $G/H$ is special (or $G$-special if $G$ is not clear from the context) if $G$ is connected and the restriction map $X(G)\to X(H)$ is an isomorphism. In this case, $G^1$ acts transitively on~$G/H$, and as a $G^1$-homogeneous space, $G/H\cong G^1/G^1\cap H$ is special in the sense of~\cite{Ono}.

An irreducible variety $V$ with a regular $G$-action is called a prehomogeneous variety if it contains a dense (hence open) orbit $V\gen$. The elements of $V\gen$ are called generic points, and the complement $V-V\gen$ is called the singular set. If, moreover, $V$ is a vector space and the action is linear, one calls $V$ a prehomogeneous vector space. On the other hand, a prehomogeneous variety that is an affine space will be called a prehomogeneous affine space even if the affine structure is not preserved by the action. We assume henceforth that the ground field has characteristic zero, so that the orbit map $G/G^\xi\mapsto V\gen$ is an isomorphism for any $\xi\in V\gen$ (cf.~\cite{Ki}, Proposition~2.11).

\begin{lemma}\label{relinv}
For a prehomogeneous affine $G$-space~$V$, $X_G(V\gen)/\GL(1)$ is the free group generated by the defining functions of the $G$-invariant irreducible hypersurfaces in $V$.
\end{lemma}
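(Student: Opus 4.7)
The plan is to exploit the fact that the affine space $V$ has coordinate ring $\Of(V)=E[x_1,\dots,x_n]$, a UFD with unit group $\GL(1)$, so that every divisor on $V$ is principal and units are just scalars.

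First I would observe that, because $V\gen$ is homogeneous, $\mathcal M_\chi(V\gen)=\Of_\chi(V\gen)$, so any $f\in X_G(V\gen)$ is a nowhere-vanishing regular function on $V\gen$; viewed as a rational function on $V$, its divisor $\mathrm{div}(f)$ is therefore supported on the complement $V\setminus V\gen$. The transformation law $g\cdot f=\chi(g)f$ makes $\mathrm{div}(f)$ a $G$-invariant Weil divisor, and the connectedness of $G$ forces each prime appearing in its support to be individually $G$-invariant, since the induced homomorphism from $G$ to the permutation group of the finite set of irreducible components has a closed kernel of finite index, hence open, hence all of $G$. These primes are precisely the $G$-invariant irreducible hypersurfaces $S\subset V$, for any such $S$ must lie entirely in $V\setminus V\gen$: otherwise $S\cap V\gen$ would be a proper nonempty $G$-stable closed subset of the single open orbit, which is impossible on dimensional grounds. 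Hence $\mathrm{div}(f)=\sum_S n_S\,S$ with $S$ running over these hypersurfaces.

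Next, for each such $S$, factoriality of $\Of(V)$ makes the prime ideal $I(S)$ principal, generated by an irreducible polynomial $f_S$ unique up to $\GL(1)$, and the $G$-invariance of $I(S)$ forces $g\cdot f_S=\chi_S(g)\,f_S$ for some $\chi_S\in X(G)$, so $f_S$ is itself a relative invariant. The natural homomorphism from the free abelian group on the classes $\{[f_S]\}_S$ to $X_G(V\gen)/\GL(1)$ is then surjective, because for $f$ as above the rational function $f\cdot\prod_S f_S^{-n_S}$ has trivial divisor on $V$ and hence lies in $\Of(V)^\times=\GL(1)$; and it is injective, because any relation $\prod_S f_S^{n_S}\in\GL(1)$ forces all $n_S=0$ by unique factorisation in $\Of(V)$.

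The only step requiring genuine thought is the individual $G$-invariance of each prime in the support of $\mathrm{div}(f)$, which rests on $G$ being connected. Everything else is a direct consequence of the UFD structure of $\Of(V)$ and the description of $V\gen$ as a single open $G$-orbit.
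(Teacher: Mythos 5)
Your proof is correct and follows exactly the route the paper indicates: the paper itself does not write out an argument but simply cites Kimura's Theorem~2.9 for the vector-space case and remarks that the proof carries over because $\Of(V)$ is a UFD with $\Of(V)^\times=\GL(1)$, which is precisely the skeleton you flesh out. The one place where I would tighten the wording is your step establishing that each prime in the support of $\mathrm{div}(f)$ is individually $G$-invariant: rather than invoking a homomorphism to a permutation group, it is cleaner to note that for each irreducible component $S_i$ of the singular set, the image of the irreducible variety $G\times S_i$ under the action map is irreducible, contains $S_i$, and lies in $V\setminus V\gen$, hence equals $S_i$; this uses connectedness of $G$ directly without worrying about whether the stabiliser of a component is closed. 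Everything else — the identification $\mathcal M_\chi(V\gen)=\Of_\chi(V\gen)$ on the homogeneous open orbit, the fact that $G$-invariant irreducible hypersurfaces lie in the singular set, the principality of $I(S)$ and the resulting relative invariance of its generator $f_S$, and the surjectivity/injectivity via the divisor map and unique factorisation — matches the standard argument that the paper delegates to Kimura.
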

This is proved in Theorem~2.9 of~\cite{Ki} for the case of a prehomogeneous vector space. The proof carries over to the present situation as it does not use the linearity of the action but rather the fact that $\Of(V)$ is a unique factorisation domain.

We say that a prehomogeneous affine space $V$ is special if the homogeneous space $V\gen$ is special, i.~e. if the restriction map $X(G)\to X(G^\xi)$ is an isomorphism for some (hence for every) $\xi\in V\gen$.
\begin{lemma}\label{special}
Let $V$ be a prehomogeneous affine space. Then the following conditions are equivalent.
\begin{enumerate}
\item[(i)] $V$ is special.
\item[(ii)] The singular set in $V$ has codimension larger than one.
\item[(iii)] All relative invariants of $V$ are constant.
\end{enumerate}
\end{lemma}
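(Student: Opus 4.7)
The plan is to read off all three equivalences from the exact sequence of Lemma~\ref{exact} applied to $H=G^\xi$ once we know that the Picard group of the generic orbit vanishes, and then to use Lemma~\ref{relinv} to translate condition~(iii) into condition~(ii). Pick $\xi\in V\gen$. In characteristic zero the orbit map identifies $V\gen$ with $G/G^\xi$, so Lemma~\ref{exact} yields the exact sequence
\[
1\to\GL(1)\to X_G(V\gen)\stackrel{\eta}{\to}X(G)\to X(G^\xi)\stackrel{\lambda}{\to}\Pic(V\gen),
\]
and since $V\gen$ is open and dense in~$V$, restriction of rational functions identifies $X_G(V)$ with $X_G(V\gen)$, so condition~(iii) amounts to $\eta$ being trivial.

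The next step is to establish that $\Pic(V\gen)=0$. Since $V$ is an affine space, its coordinate ring is a polynomial algebra and hence a unique factorisation domain, so $V$ is smooth with $\Pic(V)=0$. The standard excision sequence
\[
\bigoplus_D\mathbb Z[D]\to\Pic(V)\to\Pic(V\gen)\to 0,
\]
where $D$ ranges over the codimension-one irreducible components of $V-V\gen$, then forces $\Pic(V\gen)=0$. Consequently $\lambda=0$, so the restriction $X(G)\to X(G^\xi)$ is automatically surjective, and condition~(i) reduces to its injectivity. By exactness at~$X(G)$, this injectivity is equivalent to $\eta$ being trivial, i.e.\ to condition~(iii); thus (i) and~(iii) are equivalent.

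To close the loop with~(ii), I would invoke Lemma~\ref{relinv}: the quotient $X_G(V)/\GL(1)$ is freely generated by the defining functions of the $G$-invariant irreducible hypersurfaces in~$V$. Because $V\gen$ is a single dense orbit, every such hypersurface lies in $V-V\gen$ and is thus a codimension-one irreducible component of the singular set; conversely, the connectedness of~$G$ forces each irreducible component of $V-V\gen$ to be $G$-invariant. Hence (iii) holds if and only if $V-V\gen$ contains no irreducible component of codimension one, which is exactly~(ii). The only step that is not a formal manipulation of the exact sequence is the vanishing of $\Pic(V\gen)$, which is therefore where the main work lies; it rests entirely on $V$ being an affine space and hence factorial.
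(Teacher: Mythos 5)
Your proof is correct and rests on exactly the same two ingredients as the paper's, namely Lemma~\ref{relinv} and Lemma~\ref{exact} applied to the generic orbit. The one genuine point of divergence is that you establish $\Pic(V\gen)=0$ unconditionally, via factoriality of the affine space $V$ and the excision sequence for class groups of open subsets, whereas the paper derives $\Pic(V\gen)=0$ only from hypothesis~(ii), using the weaker and more elementary observation that removing a locus of codimension at least two leaves the class group unchanged. This reorganizes the logic: you get $\lambda=0$ once and for all, so the exact sequence collapses (i)\ to injectivity of $X(G)\to X(G^\xi)$, giving (i)\ $\Leftrightarrow$ (iii)\ directly, and then (iii)\ $\Leftrightarrow$ (ii)\ via Lemma~\ref{relinv}; the paper instead proves (ii)\ $\Leftrightarrow$ (iii), uses (iii)\ for injectivity, uses (ii)\ for surjectivity to get (i), and notes the converse (i)\ $\Rightarrow$ (iii)\ is clear. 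Your version is marginally tidier and makes explicit the slightly stronger fact that the generic orbit of any prehomogeneous affine space has trivial Picard group; both proofs are essentially the same argument.
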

\begin{proof}
By Lemma~\ref{relinv}, conditions~(ii) and~(iii) are equivalent. In view of Lemma~\ref{exact}, condition~(iii) implies that the restriction map $X(G)\to X(G^\xi)$ is injective. Condition~(ii) implies that the divisor class groups of $V\gen$ coincides with that of $V$ and is therefore trivial. On the other hand, it coincides with $\Pic(V\gen)$. By Lemma~\ref{exact}, the restriction map is then surjective, and condition~(i) follows. The converse is clear.
\end{proof}
If we have a $G$-equivariant morphism of $G$-varieties $\pi:V\to W$, then any relative invariant of $W$ can be composed with $\pi$ to produce a relative invariant of~$V$. Under certain circumstances, the connection is even closer.
\begin{prop}\label{subprehom}
Let $G$ be a connected linear algebraic group and $\pi:V\to W$ an equivariant dominant morphism of irreducible $G$-varieties with irreducible fibres.
\begin{enumerate}
\item[(i)] $V$ is $G$-prehomogeneous if and only if $W$ is $G$-prehomogeneous and, for some (hence any) $\eta\in W\gen$, the fibre $\pi^{-1}(\eta)$ is $G^\eta$-prehomogeneous. In this situation, we have $W\gen=\pi(V\gen)$ and $\pi^{-1}(\eta)\gen=V\gen\cap\pi^{-1}(\eta)$.
\item[(ii)] Suppose moreover that $V$ and $W$ are affine spaces and that $\pi$ is an affine map. Then $V$ is $G$-special if and only if $W$ is $G$-special and, for some (hence any) $\eta\in W\gen$, the fibre $\pi^{-1}(\eta)$ is $G^\eta$-special.
\end{enumerate}
\end{prop}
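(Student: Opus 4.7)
For part (i), I argue by a dimension count. First, since $W\gen$ (when it exists) is a single $G$-orbit, the fibres of $\pi$ over points of $W\gen$ are abstractly $G$-conjugate, and the dominant morphism $\pi^{-1}(W\gen)\to W\gen$ with irreducible fibres on an open dense subset of $V$ forces each such fibre to have dimension $\dim V-\dim W$; this justifies the ``some (hence any)'' formulation. For the forward direction, suppose $V$ is prehomogeneous with $\xi\in V\gen$. Then $\pi(G\xi)=G\pi(\xi)$ is dense in $W$, so $W$ is prehomogeneous and $\eta:=\pi(\xi)\in W\gen$, and from $G^\xi\subset G^\eta$ together with the orbit-dimension formula applied to the dense orbits $G\xi\subset V$ and $G\eta\subset W$,
\[
\dim(G^\eta\cdot\xi)=\dim G^\eta-\dim G^\xi=\dim V-\dim W=\dim\pi^{-1}(\eta),
\]
so $\xi$ generates a dense $G^\eta$-orbit in the irreducible fibre. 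The converse reverses this count: starting from a generic point $\xi$ in a $G^\eta$-prehomogeneous fibre over some $\eta\in W\gen$, one obtains $\dim G\xi=\dim V$, so $V$ is prehomogeneous with $\xi\in V\gen$. The identities $W\gen=\pi(V\gen)$ and $V\gen\cap\pi^{-1}(\eta)=\pi^{-1}(\eta)\gen$ then follow from the uniqueness of dense orbits.

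For part (ii), fix $\xi\in V\gen$ and $\eta=\pi(\xi)\in W\gen$, so that $G^\xi\subset G^\eta$. The easy direction is a composition argument: if $W$ and $\pi^{-1}(\eta)$ are special, then by definition both restriction maps $X(G)\to X(G^\eta)$ and $X(G^\eta)\to X(G^\xi)$ are isomorphisms, hence so is $X(G)\to X(G^\xi)$, and $V$ is special.

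For the hard direction I appeal to Lemma~\ref{special}(ii): a prehomogeneous affine space is special if and only if it carries no $G$-invariant irreducible hypersurface. Assume $V$ is special. If $H_W\subset W$ were a $G$-invariant irreducible hypersurface, then $\pi^{-1}(H_W)$ would be a $G$-invariant closed subset of $V$ of pure dimension $\dim V-1$ (the fibres of $\pi$ all have the constant dimension $\dim V-\dim W$ since $\pi$ is an affine map of affine spaces); any irreducible component of maximal dimension, being permuted by the connected group $G$, would be a $G$-invariant irreducible hypersurface in $V$, contradicting the speciality of $V$. Similarly, a $G^\eta$-invariant irreducible hypersurface $H_F$ inside $F:=\pi^{-1}(\eta)$ would yield, via the associated-bundle isomorphism $\pi^{-1}(W\gen)\cong G\times^{G^\eta}F$ (which expresses $\pi^{-1}(W\gen)$ as the equivariant fibre bundle over $W\gen\cong G/G^\eta$ with fibre $F$), the $G$-invariant irreducible closed subset $G\times^{G^\eta}H_F$ of codimension one in the open set $\pi^{-1}(W\gen)\subset V$; its closure in $V$ would then be a $G$-invariant irreducible hypersurface, again contradicting speciality. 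The main technical point in this argument is the associated-bundle description $\pi^{-1}(W\gen)\cong G\times^{G^\eta}F$, which provides the bijection between $G^\eta$-invariant closed subsets of $F$ and $G$-invariant closed subsets of $\pi^{-1}(W\gen)$ that is used to transport hypersurfaces.
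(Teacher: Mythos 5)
Your proof is correct, but it differs from the paper's in several places, mostly to its advantage. In part~(i) the paper combines a topological argument (a dense $G$-orbit in $V$ meets each fibre over $\pi(V\gen)$ in a dense open subset) with a codimension formula valid on a suitable open set $U$, which requires some care to ensure that the chosen fibre actually meets $U$; you instead run a clean orbit--stabiliser dimension count through the chain $G^\xi\subset G^\eta\subset G$ in both directions, exploiting that the $G$-action forces all fibres over the single orbit $W\gen$ to have the constant dimension $\dim V-\dim W$. For part~(ii), your ``easy'' direction ($W$ and the fibre special imply $V$ special) is genuinely different from the paper's: you deduce that $X(G)\to X(G^\xi)$ is an isomorphism by composing the isomorphisms $X(G)\to X(G^\eta)$ and $X(G^\eta)\to X(G^\xi)$, working directly from the definition of special, whereas the paper derives this implication by contraposition via the codimension formula applied to an invariant hypersurface in $V$; your version is shorter and more transparent. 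For the ``hard'' direction the two treatments of the $W$-part agree (preimage of a $G$-invariant hypersurface is one), but for the fibre part you invoke the associated-bundle isomorphism $\pi^{-1}(W\gen)\cong G\times^{G^\eta}F$ to transport a $G^\eta$-invariant hypersurface of $F=\pi^{-1}(\eta)$ to a $G$-invariant one in $V$, while the paper translates the given hypersurface $T$ by $G$ until it meets the good open set $U$ and reads off $\codim_V S=1$ from the codimension formula. The associated-bundle identification is a true statement in characteristic zero, but its justification (existence of the quotient, and that the resulting bijective morphism to $\pi^{-1}(W\gen)$ is an isomorphism) is about as much work as the paper's translation trick; you could sidestep it by observing directly that $\overline{G\cdot H_F}$ is $G$-invariant, irreducible (being the closure of the image of the irreducible variety $G\times H_F$), contained in the singular set of $V$, and of dimension at least $\dim W+\dim H_F=\dim V-1$, hence a $G$-invariant irreducible hypersurface.
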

\begin{proof} (i) If $O$ is the dense $G$-orbit in~$V$, then $\pi(O)$ is a dense $G$-orbit in~$W$, and for any $\eta\in\pi(O)$, the set $O\cap\pi^{-1}(\eta)$ is a non-empty open subset of~$\pi^{-1}(\eta)$, hence dense in that irreducible variety. Moreover, for any two points of that set, there is an element $g$ of $G$ carrying one to the other, and by the equivariance of~$\pi$, we have $g\in G^\eta$. This proves one direction of the equivalence and the characterisation of generic orbits.

Since $\pi$ is dominant, there is a non-empty open subset $U$ of $V$ such that
\[
\dim V=\dim W+\dim Z
\]
for every irreducible component $Z$ of a fibre $\pi^{-1}(\eta)$ that intersects~$U$. Now assume that $W$ is $G$-prehomogeneous and that $\pi^{-1}(\eta)$ is $G^\eta$-prehomogeneous for some $\eta\in W\gen$. Choose $\xi$ in the regular $G^\eta$-orbit of~$\pi^{-1}(\eta)$ intersected with $U$ and let $O$ be the $G$-orbit of~$\xi$. As before, we see that $O\cap\pi^{-1}(\eta)$ is the regular $G^\eta$-orbit in~$\pi^{-1}(\eta)$. An analogous dimension formula is valid for the restriction $\bar O\to\overline{\pi(O)}$ of~$\pi$ and those irreducible components of fibres which intersect some open subset of~$\bar O$. By forming differences, we get
\[
\codim_VO=\codim_W\pi(O)+\codim_{\pi^{-1}(\eta)}(O\cap\pi^{-1}(\eta)).
\]
Since both terms on the right-hand side vanish, so does the left-hand side.
\medskip

\noindent(ii) If $S$ is an irreducible subvariety of $V$, then again there is a dense open subset $U$ of $S$ such that
\begin{equation}\label{codim}
\codim_VS=\codim_W\pi(S)+\codim_{\pi^{-1}(\eta)}T
\end{equation}
for every irreducible component $T$ of a fibre $\pi^{-1}(\eta)\cap S$ that intersects~$U$. Thus, if $S$ is an invariant hypersurface in~$V$, then so is either $\pi(S)$ in $W$ or an irreducible component of $S\cap\pi^{-1}(\eta)$ in~$\pi^{-1}(\eta)$.

Conversely, assume that there is a $G$-invariant hypersurface in~$W$. Then its preimage under $\pi$ is a $G$-invariant hypersurface in~$V$. Finally assume that, for some $\eta\in W\gen$, there is a $G^\eta$-invariant irreducible hypersurface $T$ in~$\pi^{-1}(\eta)$. Then $T$ is contained in the singular set of~$V$, hence in one of its irreducible components~$S$. If $U$ is as above, then the set of elements $y\in\overline{\pi(S)}$ for which an irredicible components of $\pi^{-1}(y)\cap S$ is disjoint with $U$ is closed. If $\xi\in T$, then its $G$-orbit is contained in~$S$ and projects onto the $G$-orbit of~$\eta$, which is dense in~$W$. Thus we can replace $\eta$ and $T$ by $G$-translates so that $T\cap U\ne\emptyset$, and we conclude from~\eqref{codim} that $\codim_VS=1$.

In view of Lemma~\ref{special}, this is what had to be proved.
\end{proof}

Note that the last Proposition applies in particular to the action of a linear algebraic group on its connected unipotent normal subgroups.

\section{Canonical parabolic subgroups}

Let $G$ be a reductive linear algebraic group. From now on we assume that the ground field has characteristic zero. Then every unipotent element of $G$ is of the form $\exp X$ for a nilpotent element $X$ of the Lie algebra $\g$ of~$G$. By the Jacobson-Morozov theorem (\cite{Bou}, \S~11, Prop.~2), $X$ can be included into a Lie triple $(X,H,Y)$ in~$\g$. In case $X=0$, we have $H=Y=0$. Most of the following is well known in the special case of unipotent elements.
\begin{theorem}\label{JM}
Let $C$ be the 
conjugacy class of an element~$\gamma\in G$ with semisimple component $\sigma$ and unipotent component~$\nu$. Let $(X,H,Y)$ be a Lie triple in~$\g$ such that $\exp X=\nu$ and denote by $\g_n$ be the eigenspace of $\ad H$ in $\g$ with eigenvalue~$n$. Then the following is true.
\begin{enumerate}
\item[(i)] The subalgebras
\[
\q=\bigoplus_{n\ge0}\g_n,\qquad
\uf=\bigoplus_{n\ge2}\g_n,\qquad
\uf'=\bigoplus_{n>2}\g_n
\]
are independent of $H$ and~$Y$. The normaliser $Q$ of $\q$ in $G$ is a parabolic subgroup with Lie algebra~$\q$, called the canonical parabolic of~$\gamma$. The centraliser $L$ of $H$ is a Levi component, and $U=\exp\uf$, $U'=\exp\uf'$ are normal subgroups of~$Q$.
\item[(ii)] If $Q\can$ denotes the set of elements of~$G$ whose canonical parabolic is~$Q$, then $C\cap Q\can$ is a 
conjugacy class in~$Q$, and the set $C\cap\gamma U$ is an open and dense 
$Q_\sigma U$-orbit in~$\gamma U=\sigma U$ invariant under translations by elements of~$U'$.
\item[(iii)] The variety $V=\gamma U/U'$ is a prehomogeneous affine space under the action of~$Q_\sigma U/U'$. Left multiplication by $\gamma$ or $\sigma$ induces an isomorphism of the $Q_\sigma$-space $U_\sigma/U'_\sigma$ onto~$V^\sigma$, and the latter is a regular $L_\sigma$-prehomogeneous vector space.
\end{enumerate}
\end{theorem}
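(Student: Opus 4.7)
The plan is to treat the three assertions in turn.

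For~(i), I would invoke the Jacobson-Morozov theorem to complete $X$ to an $\mathfrak{sl}_2$-triple, together with Mal'cev's uniqueness theorem: any two such triples with the same $X$ are conjugate by an element of the unipotent radical of $G^X$, and such a conjugation fixes $X$ and preserves each filtration layer $\bigoplus_{n\ge k}\g_n$. Hence $\q$, $\uf$, $\uf'$ are independent of the choice of $(H,Y)$. Standard $\mathfrak{sl}_2$-representation theory then identifies $\g_0$ as a Levi complement to $\bigoplus_{n\ge1}\g_n$ in $\q$ and exhibits $\uf$, $\uf'$ as $\ad\q$-stable ideals; the parabolic and normality statements at the group level follow by taking normalisers and exponentials.

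For~(ii), I would first arrange that the triple is $\sigma$-stable (so $(X,H,Y)\subset\g_\sigma$) by a Borel fixed-point argument on the principal homogeneous space under the unipotent radical of $G^X$ parametrising $\mathfrak{sl}_2$-completions of $X$; this is possible because $\sigma$ commutes with $X$. Then $Q_\sigma$ is the canonical parabolic of $\nu$ in $G_\sigma$, with analogous data $L_\sigma$, $U_\sigma$, $U'_\sigma$. Next, I would apply Theorem~\ref{codim}(iii) in $G$ to the $L$-conjugacy class $[\sigma]_L$ in $M=L$, using Lemma~\ref{desc} and the Richardson property $\Ind_{Q_\sigma}^{G_\sigma}(\{1\})=[\nu]_{G_\sigma}$ of the canonical parabolic to identify the induced class with~$C$. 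This produces a dense open $Q$-orbit inside $[\sigma]_L\cdot U$. Intersecting with the slice $\sigma U$, whose stabiliser in $Q$ is precisely $Q_\sigma U$ (the preimage under $Q\to L$ of $L_\sigma$), one obtains the dense open $Q_\sigma U$-orbit $C\cap\sigma U$.

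The remaining invariance of $C\cap\sigma U$ under right translation by $U'$ is the main obstacle, and I would treat it via the $\mathfrak{sl}_2$-grading. Infinitesimally the problem reduces to showing $\uf'\subseteq(1-\Ad\gamma)(\q_\sigma+\uf)$. For $Z\in\g_n$ one has $(1-\Ad\gamma)Z\equiv(1-\Ad\sigma)Z\pmod{\g_{n+2}}$; the map $1-\Ad\sigma$ is invertible on any $\sigma$-stable complement of $\g_n^\sigma$ in $\g_n$ (as $\sigma$ is semisimple), and $\ad X:\g_{n-2}^\sigma\to\g_n^\sigma$ is surjective by $\mathfrak{sl}_2$-theory inside $\g^\sigma$, so a weight-by-weight solution yields the claim. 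Since Baker--Campbell--Hausdorff combined with $[\uf,\uf']\subseteq\uf'$ shows that right translation by $\exp(W')\in U'$ amounts to the addition of an element of $\uf'$ in log coordinates, the infinitesimal statement globalises as follows: the set $S=\{u'\in U':\gamma u'\in C\}$ is closed under multiplication (use $U'\triangleleft Q$ together with the $Q_\sigma U$-orbit property just proved) and by the infinitesimal argument contains a neighbourhood of $1$ in the connected unipotent group $U'$, whence $S=U'$ because in characteristic zero any such neighbourhood generates $U'$ via the exponential.

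For~(iii), this $U'$-invariance makes $C\cap\gamma U$ the preimage of a dense open $Q_\sigma U/U'$-orbit in $V=\gamma U/U'$, establishing the prehomogeneity; left multiplication by $\gamma$ identifies $V$ with $U/U'\cong\g_2$. Passing to $\sigma$-fixed points, the map $u\mapsto\sigma uU'$ sends $U_\sigma$ into $V^\sigma$ and factors through $U_\sigma/U'_\sigma\cong\g_2^\sigma$, giving an $L_\sigma$-equivariant isomorphism onto $V^\sigma$ (since $(U/U')^\sigma=\g_2^\sigma$). The $L_\sigma$-orbit of $X$ in $\g_2^\sigma$ is dense by Kostant's dimension count inside $\g^\sigma$, namely $\dim L_\sigma^X=\dim\g_0^\sigma-\dim\g_2^\sigma$, which itself follows from the surjectivity of $\ad X$ noted above. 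Regularity is immediate because $L_\sigma^X$ coincides with the centraliser in $L_\sigma$ of the whole $\mathfrak{sl}_2$-triple, hence is reductive.
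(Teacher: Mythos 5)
Parts~(i) and~(iii) of your proposal are essentially sound: (i) is the standard Kostant--Mal'cev argument the paper also relies on, and in~(iii) you replace the paper's explicit construction of the relative invariant $p$ and of the map $\phi:\g_2\gen\to\g_{-2}$ by the criterion that a prehomogeneous vector space of a reductive group is regular iff its generic isotropy group is reductive, which is legitimate since $L_\sigma^X$ is the centraliser of the whole Lie triple. The genuine gap is in~(ii): your route to the dense open orbit rests on the identity $\Ind_{Q_\sigma}^{G_\sigma}(\{1\})=[\nu]_{G_\sigma}$, i.e.\ on $[\nu]_{G_\sigma}$ being the Richardson class of its Jacobson--Morozov parabolic. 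This is false unless $\nu$ is even in $G_\sigma$. The unipotent radical of $Q_\sigma$ has Lie algebra $\bigoplus_{n\ge1}\g_n^\sigma$, not $\uf_\sigma=\bigoplus_{n\ge2}\g_n^\sigma$; the Richardson class has dimension $\dim\g^\sigma-\dim\g_0^\sigma$, while $\dim[\nu]_{G_\sigma}=\dim\g^\sigma-\dim\g_0^\sigma-\dim\g_1^\sigma$, so the two differ whenever $\g_1^\sigma\ne0$ (for $G=\mathrm{SL}_3$, $\sigma=1$ and $\nu$ minimal, $Q$ is a Borel subgroup and its Richardson class is the regular one). Accordingly, the codimension theorem applied to $[\sigma]_L$ in $Q$ produces a dense $Q$-orbit in $[\sigma]_L N_Q$ with $N_Q\supsetneq U$, not in $[\sigma]_L\cdot U$, and it does not identify the induced class with~$C$; the density of $C\cap\gamma U$ in $\gamma U$ is therefore not established.

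The repair is close at hand and is in fact what the paper does. Your own weight-by-weight computation, which you run only for $n\ge3$ to obtain $\uf'\subseteq(\id-\Ad\gamma)(\q_\sigma+\uf)$, works verbatim for $n=2$ (using surjectivity of $\ad X:\g_0^\sigma\to\g_2^\sigma$) and yields $\uf\subseteq(\id-\Ad\gamma)(\q_\sigma+\uf)$ --- precisely the openness of the $Q_\sigma U$-orbit of $\gamma$ in $\gamma U$ that the flawed induction step was meant to supply. The paper phrases the same content as: $\ad X:\g_n^\sigma\to\g_{n+2}^\sigma$ is surjective for $n\ge0$, so the $Q_\sigma$-orbit of $\nu$ is open in $U_\sigma$ and invariant under $U'_\sigma$-translations, and the surjectivity of $U\times U_\sigma\to\gamma U$, $(u,v)\mapsto u\gamma vu^{-1}$ (Lemma~\ref{gN}) transports this to $\gamma U$; this also gives a cleaner globalisation of the $U'$-invariance than your semigroup argument. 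Finally, you do not address why $C\cap\gamma U$ is a \emph{single} orbit rather than merely containing a dense one; the paper deduces this from $C\cap\gamma U\subseteq Q\can$ together with the identification of the stabiliser of the coset $\sigma U$ in $Q$ with $Q_\sigma U$, and some such step is needed in your write-up as well.
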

\textit{Remark.} The notion of the canonical parabolic of an element~$\gamma$ can be found in the literature in case $\gamma$ is unipotent, where the name Jacobson-Morozov parabolic is also used (cf.~\cite{Hu}, \S7.7 and \cite{CG},~\S3.8).

\textit{Remark.} In the situation of the theorem, we can choose the Lie triple inside the reductive subalgebra~$\g_\sigma$, hence the canonical parabolic of $\nu$ in~$G_\sigma$ is~$Q_\sigma$.

\textit{Remark.} In the case of unipotent~$\gamma$, assertion~(ii) amounts to the claim that $C\cap Q\can$ is an open dense 
$Q$-orbit in~$U$ invariant under translations by~$U'$.

\begin{proof}
Assertion (i) is due to Kostant~\cite{Ko} in case of unipotent~$\gamma$ and $F=\mathbb C$, but his arguments work in the general case. Since $Q$ is self-normalising, $C\cap Q\can$ is a 
$Q$-conjugacy class. Representation theory of $\mathfrak{sl}_2$ implies that the 
$Q_\sigma$-orbit of $\nu$ is open in~$U_\sigma$ and invariant under translations by~$U_\sigma'$. Moreover, the maps $U\times U_\sigma\to\gamma U$ and $U'\times U_\sigma'\to\gamma U'$ taking $(u,v)$ to $u\gamma vu^{-1}$ are surjective, whence the $Q_\sigma U$-orbit of $\gamma$ is dense in~$\gamma U$ and invariant under translations by~$U'$, while it is open on general grounds. The set $C\cap Q\can$ is clearly closed in~$C$, so its intersection with $\gamma U$ is closed in $C\cap\gamma U$. It is also dense therein, as it contains the $Q_\sigma U$-orbit of~$\gamma$, and so $C\cap\gamma U\subset Q\can$. In particular, any two elements of $C\cap\gamma U$ are conjugate by an element $q\in Q$, and since they have the same image in~$Q/U$, the element $q$ must stabilise the coset~$\sigma U$. However, the stabiliser $Q_{\sigma U}$ equals~$Q_\sigma U$, and assertion~(ii) follows.

The first part of~(iii) follows from~(ii). Suppose that $uU'\in (U/U')_\sigma$, i.e. $u\sigma u^{-1}\in\sigma U'$. From the aforementioned surjective map in case $\gamma=\sigma$ we get $u'\in U'$ and $v'\in U'_\sigma$ such that $u\sigma u^{-1}=u'\sigma v'u'^{-1}$. The left-hand side being semisimple, we have $v'=1$ and so $u^{-1}u'\in U_\sigma$. This shows that $u\in U_\sigma U'$, and so $(U/U')_\sigma=U_\sigma U'/U'\cong U_\sigma/U'_\sigma$. The generic transitivity of $L_\sigma$ follows from assertion~(ii) applied to the group~$G_\sigma$.

In order to show that the $L_\sigma$-prehomogeneous vector space  $U_\sigma/U'_\sigma$ is regular, we may assume (due to the remark preceding the proof) that $\sigma=1$, so that $U/U'\cong\g_2$ via the exponential map. Since $\ad X:\mathfrak l=\g_0\to\g_2$ is surjective, the 
$\Ad(L)$-orbit of $X$ is open in~$\g_2$. The affine space $\g_2$ is irreducible as a variety, so there can be only one open orbit~$\g_2\gen$.

Let us now fix~$H$ and hence~$L$ but let $X\in\g_2$ vary. If we fix volume forms $\mu_i$ on $\g_i$, we get a polynomial $p$ on~$\g_2$ satisfying $((\ad X)^2)^*\mu_2=p(X)\mu_{-2}$. For $X$ in~$\g_2\gen$, the map $(\ad X)^2:\g_{-2}\to\g_2$ is bijective, and hence $p(X)\ne0$. An easy calculation shows that
\[
p(\Ad(l)X)=\det(\Ad_{\g_2}(l)^2)p(X),
\]
so that $p$ is a relative invariant. For $Z\in\mathfrak l=\g_0$, the derivative of $p$ at~$X$ in the direction $[Z,X]$ equals
\[
\partial_{[Z,X]}p(X)=2\tr\ad_{\g_2}(Z)p(X).
\]
The pairing of $\g_i$ with $\g_{-i}$ defined by

\[
\langle X,X'\rangle=\tr_{\g_2}(\ad X\ad X')
\]
is nondegenerate for $i=2$, and the $L$-equivariant map $\phi:\g_2\gen\to\g_2^*\cong\g_{-2}$ defined by
\[
\langle\phi(X),X'\rangle=p(X)^{-1}\partial_{X'}p(X)
\]
satisfies
\[
\langle\phi(X),[Z,X]\rangle=2\tr\ad_{\g_2}(Z).
\]
For $X\in\g_2\gen$, there exists $Y\in\g_{-2}$ such that $(X,H,Y)$ is a Lie triple, and the $L$-orbit of~$Y$ is dense in~$\g_{-2}$. Moreover,
\[
\langle Y,[Z,X]\rangle=\langle[X,Y],Z\rangle=\langle H,Z\rangle=2\tr\ad_{\g_2}(Z)
\]
for every $Z\in\mathfrak l$, and since $\ad X:\g_0\to\g_2$ is surjective, it follows that $\phi(X)=Y$. Thus the range of $\phi$ is dense in~$\g_{-2}$, which means that the $L$-prehomogeneous vector space $\g_2$ is regular.
\end{proof}

\textit{Remark.} The prehomogeneous vector space $V^\sigma$ is of Dynkin-Kostant type in the terminology of Gyoja~\cite{Gy}, who attributes the construction of the relative invariant to Kashiwara. The proof of the last statement of the theorem could have been extracted from~\cite{Gy}, but I preferred to include the relevant short argument.

\section{A conjecture}

We are going to state a conjecture which combines the notions of inflated conjugacy class, canonical parabolic and special homogeneous space. First, we address the question how the canonical parabolic of an element varies when that element runs through an inflated conjugacy class. 

\begin{lemma}
Let $P$ be a parabolic subgroup of $G$ with unipotent radical $N$, let $C$ be a conjugacy class in~$P/N$ and $D$ its inflation to~$P$. Then among the normal subgroups $N'$ of~$P$ contained in~$N$ and such that, for each $\gamma\in D$, all elements of $D\cap\gamma N'$ have the same canonical parabolic, there is a largest one.
\end{lemma}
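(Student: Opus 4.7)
Let $\mathcal{N}$ denote the collection of closed normal subgroups $N'$ of~$P$ with $N'\subset N$ satisfying the stated property. Since $\{1\}\in\mathcal{N}$, the collection is non-empty. The plan is to establish two facts: first, $\mathcal{N}$ is closed under the operation $(N_1',N_2')\mapsto N_1'N_2'$; and second, the underlying variety $N$ is Noetherian, so the set of closed subgroups satisfies the ascending chain condition. Granting closure under products, any two maximal elements $N_1,N_2$ of~$\mathcal{N}$ satisfy $N_1N_2\in\mathcal{N}$, whence by maximality $N_1=N_1N_2=N_2$. This both produces the maximum and proves its uniqueness.

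The heart of the argument is the product-closure. If $N_1',N_2'\in\mathcal{N}$, then $N_1'N_2'$ is again a closed normal subgroup of~$P$ inside~$N$ (the product of two normal subgroups is normal, and constructible subgroups of an algebraic group are closed). Fix $\gamma\in D$ and $n\in N_1'N_2'$ with $\gamma n\in D$, and write $n=n_1n_2$ with $n_i\in N_i'$. Because $D$ is open in~$CN$ (it is the unique open dense $P$-orbit therein, by the definition of inflation), the translation map shows that
\[
V=\{(m_1,m_2)\in N_1'\times N_2'\mid\gamma m_1m_2\in D\}
\]
is an open subset of~$N_1'\times N_2'$; it contains both $(e,e)$ and $(n_1,n_2)$. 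Consider the further open set
\[
V_1=\{(m_1,m_2)\in V\mid\gamma m_1\in D\},
\]
which is non-empty since it contains~$(e,e)$. Since in characteristic zero closed subgroups of a connected unipotent group are connected, $N_1'\times N_2'$ is irreducible, so $V$ is irreducible and $V_1$ is dense in~$V$.

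On~$V_1$, the hypothesis $N_1'\in\mathcal{N}$ applied to $\gamma$ and~$m_1$ gives $Q_{\gamma m_1}=Q_\gamma$; then $N_2'\in\mathcal{N}$ applied to $\gamma m_1\in D$ and~$m_2$, using that $(\gamma m_1)m_2=\gamma m_1m_2\in D$, gives $Q_{\gamma m_1m_2}=Q_{\gamma m_1}=Q_\gamma$. The assignment $\delta\mapsto Q_\delta$ is a morphism from~$D$ to the flag variety~$\Pf$ of $G$-conjugates of~$Q_\gamma$: this descends from $G\to\Pf$, $g\mapsto gQ_\gamma g^{-1}$, because the canonical parabolic is independent of the choice of Lie triple by Theorem~\ref{JM}(i), so $G^\gamma\subset N_G(Q_\gamma)=Q_\gamma$ and hence $P^\gamma\subset Q_\gamma$. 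Consequently $(m_1,m_2)\mapsto Q_{\gamma m_1m_2}$ is a regular map $V\to\Pf$, constant on the dense subset~$V_1$ of the irreducible variety~$V$, with target a separated variety; thus it is constant on all of~$V$. Evaluating at $(n_1,n_2)$ yields $Q_{\gamma n}=Q_\gamma$, establishing $N_1'N_2'\in\mathcal{N}$.

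The main obstacle is precisely the step at which the direct factor-by-factor argument fails, namely when $\gamma n_1\notin D$; one cannot transfer information through the intermediate element $\gamma n_1$ directly. Circumventing this by replacing the given factorisation $(n_1,n_2)$ with nearby factorisations in~$V_1$, and propagating the conclusion back via the regularity of the canonical-parabolic map and the separatedness of~$\Pf$, is the decisive move. The Noetherian step in paragraph one is then routine.
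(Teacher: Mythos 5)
Your argument is correct, but it takes a genuinely different route from the paper's. The paper identifies the admissible subgroups explicitly: fixing $\gamma\in D$ with canonical parabolic $Q$, it shows (using the density of $D\cap\gamma N'$ in $\gamma N'$ supplied by Proposition~\ref{subprehom}(i), together with Theorem~\ref{JM}) that $N'$ satisfies the condition if and only if $N'\subset N\cap\xi Q\xi^{-1}$ for all $\xi\in P$, so that $N^C=\bigcap_{\xi\in P}(N\cap\xi Q\xi^{-1})$ is the largest such subgroup by construction. You instead prove existence abstractly: the family $\mathcal N$ is stable under products, so a maximal element is the maximum. Your central step --- establishing $Q_{\gamma m_1m_2}=Q_\gamma$ on the dense open locus where $\gamma m_1\in D$ and then propagating it to all of $V$ using that $\delta\mapsto Q_\delta$ is a morphism on $D$ and that the flag variety is separated --- is sound, and the ingredients you need (openness of $D$ in $CN$ from Proposition~\ref{defind}(i) and Theorem~\ref{codim}(iii); $P^\gamma\subset N_G(Q_\gamma)=Q_\gamma$ from the canonicity in Theorem~\ref{JM}(i); the characteristic-zero isomorphism $P/P^\gamma\cong D$) are all available at this point in the paper. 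The trade-off is that the paper's proof delivers an explicit description of $N^C$, which is what one actually wants downstream, whereas yours delivers only existence; on the other hand, your proof avoids Proposition~\ref{subprehom} and the finer content of Theorem~\ref{JM}(ii) entirely.

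One repair is needed in the first paragraph: a Noetherian space gives the \emph{descending} chain condition on closed subsets, not the ascending one, and ACC genuinely fails for closed subgroups of a general algebraic group (the chain of groups of $2^n$-th roots of unity in $\GL(1)$ ascends forever). What saves you is that every closed subgroup of a unipotent group in characteristic zero is connected, so a strict inclusion of members of $\mathcal N$ strictly increases dimension; hence chains in $\mathcal N$ have length at most $\dim N+1$ and maximal elements exist. Replace the appeal to Noetherianity by this dimension count (or simply take $N_0\in\mathcal N$ of maximal dimension and note that $N'N_0=N_0$ for every $N'\in\mathcal N$ by your product-closure).
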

This subgroup will be denoted by~$N^C$. Being a unipotent algebraic group over a field of charactersitic zero, it is connected.
\begin{proof}
Let $Q$ be the canonical parabolic of an element $\gamma$ of~$D$. The condition that all elements of $D\cap\gamma N'$ have the same canonical parabolic can be stated as $D\cap\gamma N'\subset Q\can$. If $\tilde C=\Ind_P^G(C)$, this is equivalent to $D\cap\gamma N'\subset\tilde C\cap Q$. By Proposition~\ref{subprehom}(i) applied to the connected stabiliser of $\gamma N$ in~$P$, the set $D\cap\gamma N'$ is dense in~$\gamma N'$. So we see that $\gamma N'\subset Q$ and, as $\gamma\in Q$, that $N'\subset Q$. Conversely, if $N'$ is a normal subgroup of $P$ contained in~$N\cap Q$, then $D\cap\gamma N'\subset\tilde C\cap Q$.

Since any element of $D$ is of the form $\xi\gamma\xi^{-1}$ with $\xi\in P$ and since its canonical parabolic is $\xi Q\xi^{-1}$, we have to set $N^C$ equal to the intersection of the subgroups $N\cap\xi Q\xi^{-1}$.
\end{proof}

If $\gamma\in D$, then by Proposition~\ref{subprehom}(i) the variety $\gamma N/N^C$ is a prehomogeneous affine space under the action of the connected stabiliser $P_{\gamma N}$ of $\gamma N$ in~$P$. Readers who prefer to fix a Levi component $M$ of~$P$ and to identify $C$ with a conjugacy class in~$M$ may write $\gamma N=\delta N$ and $P_{\delta N}=M_\delta N$ for any $\delta\in C$.

\begin{conj}\label{norelinv}
Let $D$ be the inflation to $P$ of a conjugacy class $C$ in~$P/N$. Then, for any $\gamma\in D$, the affine space $\gamma N/N^C$ is $P_{\delta N}$-special. 
\end{conj}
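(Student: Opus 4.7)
The plan is to verify the geometric criterion in Lemma~\ref{special}(ii), namely that the singular set of $V=\gamma N/N^C$ under the action of $P_{\gamma N}$ has codimension at least two in~$V$, or equivalently (by Lemma~\ref{relinv}) that $V$ carries no non-constant relative $P_{\gamma N}$-invariant. The overall strategy is to realise $V$ as the total space of a $P_{\gamma N}$-equivariant fibration whose base and generic fibre are each special, and then to apply Proposition~\ref{subprehom}(ii).

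The natural fibration is provided by the canonical parabolic itself. Let $Q$ denote the canonical parabolic of~$\gamma$, and let $U\supset U'$ be the unipotent subgroups of~$Q$ from Theorem~\ref{JM}. Restricted to the dense open subset $D\cap\gamma N$, the assignment $\delta\mapsto Q_\delta$ is a regular $P_{\gamma N}$-equivariant morphism into~$\mathcal P$, and by the defining property of $N^C$ it descends to a rational $P_{\gamma N}$-equivariant map $\psi:V\to W$, where $W$ is the closure of the $P_{\gamma N}$-orbit of~$Q$. After deleting from~$V$ a closed subset of codimension at least two (which affects neither the hypothesis nor the conclusion of Lemma~\ref{special}(ii)), one may arrange that $\psi$ is regular with irreducible fibres. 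Proposition~\ref{subprehom}(ii) then reduces the problem to two subclaims: first, that the homogeneous space $W\cong P_{\gamma N}/H$, where $H=P_{\gamma N}\cap N_G(Q)$, is $P_{\gamma N}$-special; second, that a generic fibre of~$\psi$ is $H$-special. The first subclaim translates, via Lemma~\ref{exact}, into surjectivity of the restriction $X(P_{\gamma N})\to X(H)$, which should be tractable since $H$ differs from $P_{\gamma N}$ only through the centraliser $P_\gamma$ and unipotent factors. For the second subclaim, one uses the remarks following Theorem~\ref{JM} to pass to~$G_\sigma$ and identify the generic fibre, as a prehomogeneous variety, with an appropriate subquotient of the Jacobson-Morozov affine space $\gamma U/U'$ of Theorem~\ref{JM}(iii).

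The principal obstacle lies in this last identification. By Theorem~\ref{JM}(iii) the vector space $\gamma U/U'\cong\mathfrak g_2$ is a \emph{regular} $L$-prehomogeneous vector space in the classical sense: its singular set is the vanishing locus of the non-trivial $L$-relative invariant~$p$ constructed in the proof of Theorem~\ref{JM}. In particular $\gamma U/U'$ is manifestly \emph{not} $L$-special, so any proof that the fibre of~$\psi$ is $H$-special must exploit the fact that $H$ strictly enlarges~$L$ and acts on the fibre in a way that prevents~$p$ from descending to a relative $H$-invariant---either because the character of~$L$ under which $p$ transforms fails to lift to a character of~$H$, or because $p$ fails to be well defined on the particular subquotient of $\gamma U/U'$ that actually constitutes the fibre. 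Verifying that one of these possibilities really holds requires a fine analysis of the mutual position of the parabolic subgroups $P$ and~$Q$ and of how~$H$ embeds into the automorphism group of the fibre, and it is presumably this compatibility that the author was unable to establish in general. A secondary, more technical issue is to pin down the dimensions of~$V$, $W$ and the fibre so that the generic fibre is identified precisely; this is a matter of bookkeeping with Jacobson-Morozov data and should not pose a serious difficulty once the correct formulation has been found.
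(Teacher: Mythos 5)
You are attempting to prove a statement that the paper itself leaves open: this is Conjecture~\ref{norelinv}, and the author says explicitly in the introduction that he ``was not able to prove this in general''. There is therefore no proof in the paper to compare with, and your proposal has to stand on its own. It does not: as you concede in your last paragraph, the decisive step is left unverified, so what you have is a reduction of one open problem to another, not a proof. Concretely, the plan is to fibre $V=\gamma N/N^C$ equivariantly over the variety $W$ of canonical parabolics of the elements of $D\cap\gamma N$ and to apply Proposition~\ref{subprehom}(ii). But that proposition is stated, and proved, only under the hypothesis that \emph{both} $V$ and $W$ are affine spaces and that $\pi$ is an affine map; your $W$ is (the closure of) a $P_{\gamma N}$-orbit in a generalised flag variety $\Pf$, which is not an affine space, and the map $\delta\mapsto Q_\delta$ is not affine in any sense covered by the proposition. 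Similarly, Lemma~\ref{special} and Lemma~\ref{relinv} are proved for prehomogeneous \emph{affine spaces} (the argument needs $\Of(V)$ to be a unique factorisation domain), so once you delete closed subsets of $V$ to make $\psi$ regular you are outside the setting in which the codimension-two criterion has been established. None of these gaps is obviously fatal, but each would require a separate argument that you do not supply.

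The deeper problem is the second subclaim, that the generic fibre of $\psi$ is $H$-special. This is precisely where the whole difficulty of the conjecture sits, and the evidence you yourself cite points against an easy resolution: by Theorem~\ref{JM}(iii) the natural model for the fibre, $\g_2\cong\gamma U/U'$ with $\sigma=1$, is a \emph{regular} $L$-prehomogeneous vector space, i.e.\ it carries the nonconstant relative invariant $p$ and is as far from special as a prehomogeneous space can be. Your two proposed escape routes --- that the character by which $p$ transforms fails to extend from $L$ to $H=P_{\gamma N}\cap N_G(Q)$, or that $p$ fails to descend to the actual subquotient arising as the fibre --- are exactly the ``fine analysis of the mutual position of $P$ and $Q$'' that would constitute a proof, and you do not carry it out for a single nontrivial case. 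Until one of those alternatives is actually established (and the first subclaim, surjectivity of $X(P_{\gamma N})\to X(H)$, is checked via Lemma~\ref{exact} rather than merely asserted to be ``tractable''), the statement remains a conjecture.
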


The conjectured property is necessary for the applicability of Weil's generalisation of Siegel's mean-value formula (cf.~\cite{Ono}) to certain integrals on the geometric side of the trace formula. As we shall see in a sequel paper, this is a prerequisite for regrouping the contributions of the elements of $G$ to the geometric side according to their canonical parabolics.

\end{document}